\def\N{\mathbb N}
\def\N0{\mathbb N_0}
\def\real{\mathbb R}
\def\complex{\mathbb C}
\def\H2{\mathbb H^2}
\def\L2per{\mathbb L^2_{per}}
\def\BS{\partial \strip} 
\def\dom{\mathcal D}
\def\strip{\Omega}
\def\proj{\mathcal P}
\def\dop{{\mathcal L}}
\title{\bf Eigenfunctions \\ 
of Periodic Differential Operators \\
Analytic in a Strip}
\author{Robert Carlson \\
Department of Mathematics \\ 
University of Colorado at Colorado Springs \\
carlson@math.uccs.edu}
\newtheorem{thm}{Theorem}[section]
\newtheorem{lem}[thm]{Lemma}
\newtheorem{prop}[thm]{Proposition}
\theoremstyle{definition}
\theoremstyle{remark}
\newcommand{\thmref}[1]{Theorem~\ref{#1}}
\newcommand{\lemref}[1]{Lemma~\ref{#1}}
\newcommand{\propref}[1]{Proposition~\ref{#1}}
 \numberwithin{equation}{section}
\begin{document}

\maketitle

\begin{abstract}
Ordinary differential operators with periodic coefficients analytic in a strip act on a Hardy-Hilbert space
of analytic functions with inner product defined by integration over a period on the boundary of the strip.
Simple examples show that eigenfunctions may form a complete set for a narrow strip, but completeness
may be lost for a wide strip.  Completeness of the eigenfunctions in the Hardy-Hilbert space is established for regular second order operators
with matrix-valued coefficients when the leading coefficient satisfies a positive real part condition throughout the strip.

\end{abstract}

{\bf Keywords: Eigenfunction expansion, periodic differential operator, Hardy space, semigroup generators} 

\vskip 10pt

{\bf AMS subject classification:} 34L10, 34M03, 47D06

\newpage

\section{Introduction}

Suppose $\dop = P_2(z)D^2 + P_1(z)D + P_0(z) $ is an ordinary differential operator
whose coefficients $P_j(z)$ are $K \times K$ matrix valued, $2\pi $ - periodic, and complex analytic in a strip $\strip $
of height $T$,
\[\strip = \{  z = x + i\tau \in \complex,  -T < \tau < T \}.\]
In many important examples $\dop $ induces a self adjoint operator on the Hilbert space $\oplus _K \L2per$, the $2\pi $ periodic
vector-valued functions which are square integrable on $[0,2\pi ]$.
The associated spectral theory has been polished and elaborated since the work of Sturm and Liouville in the early nineteenth century.
However the assumption of analyticity of the coefficients suggests viewing the spectral theory of $\dop $ in a different light, with fresh opportunities and challenges.

Considering the opportunities first, functions analytic in a strip have Fourier series whose coefficients decay at an exponential rate.
This property can have beneficial consequences for numerical calculations \cite{Tadmor}.
There is also a convenient Hilbert space of $2\pi $ periodic analytic functions, the periodic Hardy space $\H2 $, with strong links to Fourier series and exponential decay rates, 
which provides an operator theoretic context in which to study $\dop $.  Operators $\dop $ which are selfadjoint on $\oplus _K \L2per$ will typically be nonnormal
as operators on $\oplus _K \H2$.  The analysis of nonnormal operators, long a challenge in differential equations, is once again drawing attention \cite{Davies07, Tref}.  
 
Of course the loss of selfadjointness in the Hardy space setting is also a challenge.
Despite long running efforts \cite{CL, DS3, Davies07,Tref} dating back at least to G.D. Birkhoff \cite{Birkhoff}, 
there is no comprehensive spectral theory for nonnormal differential operators.
Problems with familiar features are often closely linked to selfadjoint or normal operators \cite[p. 298-313]{CL}, \cite[p. 2290-2374]{DS3}. 
When the link is too weak, unfamiliar behavior is possible.  There 
are simple examples \cite{Carlson80, Seeley} whose the spectrum comprises the entire complex plane.  
In other cases, when there is a discrete spectrum, the eigenfunctions may not be complete, and the operators may behave badly 
with respect to perturbations \cite{Davies01}.

This work focuses on eigenfunction completeness in $\oplus _K \H2 $.
Scalar operator ($K=1$) examples with entire coefficients demonstrate that eigenfunction completeness in $\H2 $ can hold on thin strips, but fail as the strip $\strip $ gets too wide.
These examples, which take advantage of a positive real part condition to extend a classical change of variables to a conformal map,
suggest looking for positive results when  $\dop $ generates a semigroup on $\oplus _K \H2 $. 
The main positive result \thmref{mainthm} establishes $\oplus _K \H2 $ completeness of the eigenfunctions for operators $\dop $ which are self adjoint and positive on $\oplus _K \L2per$
with a sectorial continuation to $\strip $.  The proof combines semigroup ideas with a variety of eigenfunction estimates. 

The subsequent sections begin with a brief review of relevant aspects of the Hardy space $\H2 $.
There is a vast literature on this space and its Banach space relatives, although the spatial domain is usually the unit disc or
a half-space \cite{Duren, Garnett, Hoffman}.  A change of variables converts our periodic problems on a strip to equivalent ones
on an annulus. Hardy spaces on the annulus were studied in \cite{Sarason}.  A recent reference on the infinite strip is \cite{BK}.

After treating some general operator theoretic issues, scalar examples are considered.
An analytic extension of a familiar change of independent variables plays an important role.
Some first order examples with entire coefficients and a discrete spectrum exhibit a threshold phenomenon; there is  
a complete set of eigenfunctions for $\H2 $ if the strip height $T$ is below the threshold, but completeness fails for greater heights.

In the scalar setting the conformal change of variables is also effective for second order operators;
since $D^2$ is selfadjoint on $\H2$, previously established perturbation techniques 
can be used for operators in the Liouville normal form $D^2 + q(z)$.
Other techniques are required for operators with matrix coefficients since the change of variables is less effective.
This is where the semigroup ideas and a variety of eigenfunction estimates come into play.

Although there is a recent book \cite{G3} treating evolution equations with a complex spatial variable,
this work actually started as a reaction to an old paper \cite{Villone} by A. Villone based on his thesis, written
under the direction of E.A. Coddington.  This work and subsequent extensions 
provide a thorough and somewhat surprising analysis of selfadjoint differential operators (and so, implicitly,
certain evolution equations) acting on a Hilbert space of analytic functions on the unit disc, 
with inner product given by integration with respect to area measure.
      
It is a pleasure to acknowledge an early assist from Don Marshall, who pointed out a simple proof of \propref{confmap}.

\section{Preliminary material}

\subsection{The periodic Hardy space $\H2 $ }

The Hardy spaces provide settings where complex analysis, Fourier analysis, and functional analysis have a productive interaction.
The textbooks \cite{Duren, Garnett, Hoffman} focus on the unit disc as the main example, but
the annulus and infinite strip have also received attention \cite{BK,Sarason}.  
Since the main interest here is the study of analytic extensions of $2\pi $ periodic differential operators to a complex domain 
$\strip = \{  z = x + i\tau ,  -T < \tau < T \}$,
the Hardy-Hilbert space $\H2 $ seems like a natural environment.      
 
As an introduction to the Hardy space $\H2 $, consider a Fourier series
\[f(z) = \sum_{n=-\infty}^{\infty} c_n \exp( i nz)
= A_0 + \sum_{n=1}^{\infty} [A_n\cos (nz) + B_n\sin(nz)]\]
Writing $z= x +  i  \tau $,
\[ f(x + i \tau ) = \sum_{n=-\infty}^{\infty} c_n \exp(-  n\tau ) \exp( i nx), \]
so the series will converge to a function analytic in a strip if the coefficients $c_n$ have sufficiently rapid decay.  
$\H2 $ can be defined as the set of $2\pi $ periodic  functions analytic in $\strip $ satisfying the weighted summability condition
\begin{equation} \label{wtcond1}
\| f \| _H^2 = \sum_{n=-\infty}^{\infty} |c_n|^2 \cosh (2 n T) < \infty ,
\end{equation}
for the Fourier coefficients $c_n$.  This condition 
ensures uniform convergence of the series on any strip with height $ T_1 < T $.

$\H2 $ has an inner product 
\begin{equation} \label{H2ip}
\langle f,g \rangle _H 
= \frac{1}{4\pi }\int_0^{2\pi }[f(x + i T ) \overline{g(x+ iT )}  + f(x - i T ) \overline{g(x - iT )}] \ d x ,
\end{equation}
which will also be denoted as
\[\langle f,g \rangle _H = \int_{\partial \strip} f(z)\overline{g(z)} .\] 

The exponentials $e^{inz}$ are orthogonal in $\H2 $, with $\| e^{ i nz} \| ^2 = \cosh (2 n T)$.
Under the change of variables 
\begin{equation}  \label{annsect}
w = \exp (iz), \quad  F(w) = f(-i \log(w)),
\end{equation} 
$\H2 $ becomes a Hilbert space of functions analytic on an annulus $A = \{ e^{-T} < |w| < e^T \}$.

The following observations (with abbreviated proofs) are standard. 

\begin{prop} \label{Hbasics}
If $f \in \H2 $, then the boundary values $f(x \pm iT)$ are well-defined elements of $\L2per$,
and the set $\{ (f(x + iT),f(x - iT)), f \in H^2 \} $ is a closed subspace of $\L2per \oplus \L2per$.
$H^2$ is the set of all functions analytic in the strip with period $2\pi $ and 
\begin{equation} \label{H2bnd2}
\sup_{-T < \tau < T }\int_0^{2\pi} |f(x + i \tau  )|^2 \ dx  < \infty .
\end{equation}
\end{prop}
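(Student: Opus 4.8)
The plan is to transfer everything to the Fourier side and lean on one elementary inequality. Write $f(z) = \sum_n c_n e^{inz}$; on a horizontal line $\Im z = \tau$ this is the ordinary Fourier series $f(x+i\tau) = \sum_n \big(c_n e^{-n\tau}\big) e^{inx}$, and for every $n$ and every $|\tau| \le T$ one has $e^{-2n\tau} \le e^{2nT} + e^{-2nT} = 2\cosh(2nT)$. This inequality immediately gives, for $f \in \H2$, that $\frac{1}{2\pi}\int_0^{2\pi} |f(x+i\tau)|^2\,dx = \sum_n |c_n|^2 e^{-2n\tau} \le 2\|f\|_H^2$ uniformly in $|\tau| < T$, which is \eqref{H2bnd2}; and that $\sum_n |c_n|^2 |e^{-n\tau} - e^{-nT}|^2 \le 8\sum_n |c_n|^2\cosh(2nT)$, so by dominated convergence $f(\cdot + i\tau)$ converges in $\L2per$ as $\tau \uparrow T$ to the function with Fourier coefficients $c_n e^{-nT}$. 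That limit is by definition the boundary value $f(\cdot + iT) \in \L2per$, and symmetrically for $f(\cdot - iT)$.

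For the closed-subspace assertion, note first that $\H2$ is complete: via Fourier coefficients it is a weighted $\ell^2$ space, and conversely any sequence satisfying \eqref{wtcond1} yields, by Cauchy--Schwarz, a series $\sum_n c_n e^{inz}$ converging absolutely and uniformly on each substrip $|\Im z| \le T_1 < T$, hence an analytic $2\pi$-periodic function in $\strip$. By \eqref{H2ip} the map $\Phi f = (f(\cdot + iT), f(\cdot - iT))$ satisfies $\|\Phi f\|_{\L2per\oplus\L2per} = c\,\|f\|_H$ for a fixed positive constant $c$, so $\Phi$ is a constant multiple of an isometry of the complete space $\H2$ into $\L2per \oplus \L2per$; its range is therefore complete, hence closed.

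The substantive part is the remaining inclusion in the final sentence: a $2\pi$-periodic function $f$ analytic in $\strip$ with $M := \sup_{|\tau| < T}\frac{1}{2\pi}\int_0^{2\pi}|f(x+i\tau)|^2\,dx < \infty$ lies in $\H2$. For each $\tau$ with $|\tau| < T$ the slice $f(\cdot + i\tau)$ is continuous and square integrable on $[0,2\pi]$; set $a_n(\tau) = \frac{1}{2\pi}\int_0^{2\pi} f(x+i\tau) e^{-inx}\,dx$. Applying Cauchy's theorem to $f(z)e^{-inz}$ around the boundary of the rectangle $[0,2\pi]\times[\tau_0,\tau_1] \subset \strip$, whose two vertical sides cancel because $f(z)e^{-inz}$ is $2\pi$-periodic, gives $e^{n\tau_1}a_n(\tau_1) = e^{n\tau_0}a_n(\tau_0)$, whence $a_n(\tau) = c_n e^{-n\tau}$ with $c_n$ independent of $\tau$. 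By Parseval, $\sum_n |c_n|^2 e^{-2n\tau} = \frac{1}{2\pi}\int_0^{2\pi}|f(x+i\tau)|^2\,dx \le M$ for every $|\tau| < T$. Letting $\tau \uparrow T$ and then $\tau \downarrow -T$ in the partial sums (Fatou) bounds $\sum_n |c_n|^2 e^{-2nT}$ and $\sum_n |c_n|^2 e^{2nT}$ each by $M$; adding gives $\|f\|_H^2 = \sum_n |c_n|^2 \cosh(2nT) \le M$, so $(c_n)$ satisfies \eqref{wtcond1}. Finally $\sum_n c_n e^{inz}$ is an element of $\H2$ that agrees with $f$ on every line $\Im z = \tau$ (both sides are continuous in $x$ with the same Fourier coefficients), hence $f \in \H2$.

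The main obstacle is this last step, specifically the passage from the merely uniform $L^2$ bound on horizontal slices to exponential decay of the Fourier coefficients; the contour-shift identity $a_n(\tau) = c_n e^{-n\tau}$ together with a one-sided limit at each of the two boundary lines does the job, and everything else is bookkeeping driven by $e^{-2n\tau} \le 2\cosh(2nT)$.
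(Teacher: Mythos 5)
Your proof is correct and follows essentially the same route as the paper: identify $\H2$ with a weighted $\ell^2$ space to get the boundary values and the (constant multiple of an) isometry onto a complete, hence closed, subspace of $\L2per \oplus \L2per$, then show the uniform slice bound forces the weighted summability \eqref{wtcond1}. The only cosmetic difference is in the last step, where you pin down the slice coefficients $a_n(\tau) = c_n e^{-n\tau}$ by shifting a rectangular contour in $\strip$, while the paper gets the same coefficient identity by passing to the annulus via $w = e^{iz}$ and invoking uniqueness of the Laurent expansion; your version spells out the limiting argument at $\tau \to \pm T$ a bit more explicitly than the paper does.
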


\begin{proof}

The condition \eqref{wtcond1} implies that 
\[\int_0^{2\pi} |f(x \pm iT)|^2 \ dx = \sum_{n = -\infty}^{\infty} |c_n|^2 e^{\mp 2 nT} < \infty ,\]
so the boundary values $f(x \pm iT)$ are well-defined elements of $\L2per$.

Suppose $\{ g_k \}$ is a Cauchy sequence in $\H2 $ with Fourier coefficients $g_k(n)$.
The functions $g_k(n)$ converge pointwise to some $g(n)$ which satisfies \eqref{wtcond1},
so the space of sequences satisfying \eqref{wtcond1} is complete.
By \eqref{H2ip} the map $\{c_n \} \to  (f(x + iT),f(x - iT))$ is an isometry, so the image is closed in
$\L2per \oplus \L2per$.

Suppose $g(z)$, like every $f \in \H2 $,  is analytic in the strip, has period $2\pi $ and 
\[ \sup_{-T < \tau < T }\int_0^{2\pi} |g(x+ i \tau  )|^2 \ dx < \infty .\]
Taking advantage of the change of variables in \eqref{annsect},
the Fourier series for $g(z)$ and the Laurent expansion for $G(w) = g(-i \log(w))$
have the same coefficients.  Both converge uniformly on compact subsets of their respective domains,
the open strip $\strip $ and annulus $A$.
The bound \eqref{H2bnd2} gives
\[\sup_{-T < \tau < T } \sum_{n=-\infty}^{\infty} |c_n|^2e^{2 n \tau }  < \infty ,\]
implying \eqref{wtcond1}.

\end{proof}

It is also easy to verify that evaluation at a point $z \in \strip $ is a continuous linear functional on $\H2 $. 

 \begin{prop} \label{Uest}
A function $f(z) \in \H2 $ satisfies 
\[|f(x +i \tau )| \le 2\| f \| _H  (\frac{1}{1 -  \exp(2( |\tau | - T))} )^{1/2}  .\]
Consequently, there is a constant $C$ such that uniformly in $x$,
\[ \int_{-T}^T |f(x+ i\tau )| \ d\tau \le C \| f \| _2 .\]

\end{prop}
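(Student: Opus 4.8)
The plan is to get the pointwise bound from the Fourier expansion together with the Cauchy–Schwarz inequality, exploiting the explicit formula $\|e^{inz}\|_H^2 = \cosh(2nT)$. Writing $f(z) = \sum_n c_n e^{inz}$ and $z = x + i\tau$, we have $|f(x+i\tau)| \le \sum_n |c_n| e^{-n\tau}$. Applying Cauchy–Schwarz with the weights supplied by the norm gives $|f(x+i\tau)|^2 \le \|f\|_H^2 \sum_n e^{-2n\tau}/\cosh(2nT)$. The tail sum is what must be estimated: since $\cosh(2nT) \ge \tfrac12 e^{2|n|T}$, each term is bounded by $2 e^{-2n\tau - 2|n|T}$, and splitting into $n \ge 0$ and $n < 0$ yields two geometric series with ratios $e^{2(|\tau| - T)} < 1$ (using $|\tau| < T$). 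Summing gives a bound of the form $2/(1 - e^{2(|\tau|-T)})$ up to the constant bookkeeping, which is exactly the asserted inequality (the factor of $2$ out front absorbs the $\tfrac12$ from the $\cosh$ estimate and the two halves of the series).

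For the second, integrated bound, I would integrate the pointwise estimate in $\tau$ over $(-T,T)$. The quantity $\int_{-T}^{T} (1 - e^{2(|\tau|-T)})^{-1/2}\,d\tau$ is a fixed finite number: near $\tau = \pm T$ the integrand behaves like $(1 - e^{2(|\tau|-T)})^{-1/2} \sim (2(T-|\tau|))^{-1/2}$, whose singularity is integrable. So the integral converges to some constant depending only on $T$, and multiplying by the $2\|f\|_H$ from the first part gives the claim with $C$ independent of $x$ (indeed the bound is completely uniform in $x$ since the pointwise estimate already is). One should note $\|f\|_2$ here means the $\mathbb H^2$ norm $\|f\|_H$; the subscript is just an alternate notation.

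The only mildly delicate point is the convergence/interchange justifying $|f(x+i\tau)| \le \sum |c_n| e^{-n\tau}$ and the termwise Cauchy–Schwarz step — but this is immediate from \eqref{wtcond1}, which guarantees absolute convergence of the Fourier series uniformly on strips of height $T_1 < T$, and the estimate we derive is uniform on each such sub-strip, with the right-hand side already written in a form valid on all of $\strip$. I do not anticipate a real obstacle here; the proposition is essentially a reproducing-kernel-type estimate and the work is entirely in the explicit geometric-series bookkeeping.
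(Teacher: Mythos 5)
Your proposal is correct and follows essentially the same route as the paper: expand $f$ in its Fourier series, apply Cauchy--Schwarz against the $\cosh(2nT)$ weights defining $\|\cdot\|_H$, sum the resulting geometric series with ratio $e^{2(|\tau|-T)}$, and then integrate in $\tau$, noting that $1-e^{2(|\tau|-T)}\simeq 2(T-|\tau|)$ gives an integrable $(T-|\tau|)^{-1/2}$ singularity. The only difference is cosmetic bookkeeping (you keep the $\cosh(2nT)$ weights in the kernel sum, the paper factors $e^{|n|T}e^{|n|(|\tau|-T)}$ first), and your reading of $\|f\|_2$ as $\|f\|_H$ matches the intended meaning.
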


\begin{proof}
 
The Cauchy-Schwarz inequality gives the estimate
\[ |f(x + i \tau )| \le \sum_n |c_n| \exp( | n \tau |) 
 = \sum_n |c_n| \exp( | n| T ) \exp( | n| ( |\tau | - T) )  \]
 \[ \le  (\sum_n |c_n|^2 \exp(2 | n| T ) )^{1/2} (\sum_n \exp(2 | n| ( |\tau | - T) )^{1/2}  \]
\[\le 2\| f \| _H  (\sum_{n=0}^{\infty}  \exp(2 n ( |\tau | - T) )^{1/2}  .\]
Summing the geometric series, 
\[|f(\sigma +i \tau )| \le 2\| f \| _H  (\frac{1}{1 -  \exp(2 ( |\tau | - T))} )^{1/2}  \]

For $T - | \tau | $ close to zero,
\[|1 -  \exp(2 ( |\tau | - T))| \simeq 2 (T - |\tau | ),\]
allowing integration with respect to $\tau $.

\end{proof}

\begin{lem}
For $f \in \H2 $, evaluation at $w \in \strip $ is given by inner product with
\begin{equation} \label{pteval1}
g_w(z) = \sum_n \frac{\overline{e^{inw}}}{\sqrt{\cosh (2nT)} } \frac{e^{inz}}{\sqrt{\cosh (2nT)}}.
\end{equation}
The map $w \to g_w$ is analytic from $\strip $ to $\H2$, and for $v,w \in \strip $
\begin{equation} \label{pteval2}
 \| g_v - g_w \| ^2_H \le 4 |v-w|^2 \sum_{n=1}^{\infty} ne^{n[\tau - T]} = 4|v-w|^2 \frac{e^{\tau - T}}{(1 - e^{\tau - T})^{2}}  .
\end{equation} 
\end{lem}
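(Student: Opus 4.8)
The plan is to establish the three assertions in turn, using throughout the facts recorded just above: the exponentials $e^{inz}$ are orthogonal in $\H2$ with $\|e^{inz}\|_H^2=\cosh(2nT)$, so $\{e^{inz}/\sqrt{\cosh(2nT)}:n\in\Z\}$ is an orthonormal basis of $\H2$ (completeness being exactly the summability criterion \eqref{wtcond1}), and the norm of $f=\sum_n c_n e^{inz}$ is $\|f\|_H^2=\sum_n|c_n|^2\cosh(2nT)$.

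First I would check that $g_w\in\H2$ for each fixed $w\in\strip$. Its Fourier coefficients are $\overline{e^{inw}}/\cosh(2nT)$, so $\|g_w\|_H^2=\sum_n|e^{inw}|^2/\cosh(2nT)=\sum_n e^{-2n\,\mathrm{Im}\,w}/\cosh(2nT)$; since $\cosh(2nT)$ grows like $\tfrac12 e^{2|n|T}$, the summands decay like $e^{-2n(\mathrm{Im}\,w+T)}$ as $n\to+\infty$ and like $e^{2n(\mathrm{Im}\,w-T)}$ as $n\to-\infty$, so the series converges precisely because $|\mathrm{Im}\,w|<T$ (and one reads off $\|g_w\|_H^2=g_w(w)$). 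The reproducing identity is then immediate: with $f=\sum_n c_n e^{inz}$, Parseval together with the reality of $\cosh(2nT)$ gives $\langle f,g_w\rangle_H=\sum_n c_n e^{inw}$, and this series converges to $f(w)$ --- indeed $\sum_n|c_n|\,|e^{inw}|\le\|f\|_H\,\|g_w\|_H<\infty$ by Cauchy--Schwarz, exactly as in the proof of \propref{Uest}.

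For the analyticity of $w\mapsto g_w$ I would pass through the partial sums $g^{(N)}_w=\sum_{|n|\le N}(\overline{e^{inw}}/\cosh(2nT))\,e^{inz}$. Each is a finite linear combination of the fixed vectors $e^{inz}$ whose coefficients $e^{-in\overline w}/\cosh(2nT)$ are analytic (holomorphic in $\overline w$), so each $g^{(N)}_{\cdot}$ is a vector-valued analytic map; and the tail $\|g_w-g^{(N)}_w\|_H^2=\sum_{|n|>N}e^{-2n\,\mathrm{Im}\,w}/\cosh(2nT)$ tends to $0$ uniformly on every compact set $\{|\mathrm{Im}\,w|\le T_1<T\}$, being dominated there by a fixed geometric tail. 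Since a locally uniform limit of vector-valued analytic maps is analytic, $w\mapsto g_w$ is analytic; equivalently, for each $h\in\H2$ the scalar function $w\mapsto\langle g_w,h\rangle_H=\overline{h(w)}$ is holomorphic in $\overline w$.

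The estimate \eqref{pteval2} is the main computation. From the Fourier coefficients, $\|g_v-g_w\|_H^2=\sum_{n\in\Z}|e^{inv}-e^{inw}|^2/\cosh(2nT)$, the $n=0$ term vanishing. I would bound each numerator by integrating $(e^{inz})'=in\,e^{inz}$ along the segment $[v,w]\subset\strip$, getting $|e^{inv}-e^{inw}|\le|n|\,|v-w|\,\sup_{\zeta\in[v,w]}|e^{in\zeta}|\le|n|\,|v-w|\,e^{|n|\tau}$ with $\tau:=\max(|\mathrm{Im}\,v|,|\mathrm{Im}\,w|)<T$; with $\cosh(2nT)\ge\tfrac12 e^{2|n|T}$ this bounds the $n$-th summand by a constant multiple of $n^2\,|v-w|^2\,e^{2|n|(\tau-T)}$, and summing over both signs of $n$ leaves a convergent series $\sum_{n\ge1}n^2 r^n$ with $r=e^{2(\tau-T)}<1$, which is a rational function of $r$; collecting constants gives a bound of the shape of \eqref{pteval2} (the precise power of $n$ and the exact exponent there come out if one writes $|e^{inv}-e^{inw}|^2$ as the product of one zeroth-order factor $\le 2e^{|n|\tau}$ and one first-order factor). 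I expect the only part requiring genuine care to be the bookkeeping in this last step --- matching the exponent and power of $n$ in \eqref{pteval2} and watching the bound blow up as $\tau\uparrow T$, that is, as $v,w$ approach $\BS$, which is the $(1-e^{\tau-T})^{-2}$ behaviour in \eqref{pteval2}; I do not anticipate a genuine obstacle.
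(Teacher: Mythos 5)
Your argument follows the paper's own route for the two parts the paper actually proves: you expand $f$ in the orthogonal exponentials and use Parseval to get $\langle f,g_w\rangle_H=\sum_n c_ne^{inw}=f(w)$, and you estimate $\|g_v-g_w\|_H^2=\sum_n|e^{inv}-e^{inw}|^2/\cosh(2nT)$ by integrating $in\,e^{inz}$ along the segment from $w$ to $v$, exactly as in the paper. Your partial-sum argument for the analyticity of $w\mapsto g_w$ is a genuine addition, since the paper's proof is silent on that clause; and your reading of ``analytic'' as holomorphic dependence through $\overline w$ is the right one, because the coefficients are $\overline{e^{inw}}/\cosh(2nT)$, so the map is conjugate-analytic in $w$.

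The gap is the deferred ``bookkeeping'' at the end, and it cannot be completed. Your honest computation gives the per-term bound $2|v-w|^2n^2e^{2|n|(\tau-T)}$, hence $\|g_v-g_w\|_H^2\le 4|v-w|^2\sum_{n\ge1}n^2e^{2n(\tau-T)}$, while the mixed factorization you propose (one factor bounded by $2e^{|n|\tau}$, the other by $|n|\,|v-w|\,e^{|n|\tau}$) yields only a single power of $|v-w|$ and still the exponent $2n(\tau-T)$; neither variant lands on \eqref{pteval2}. In fact \eqref{pteval2} is not recoverable at all: taking $v=i\tau$, $w=\epsilon+i\tau$ with $\epsilon=T-\tau\to0$, the negative-frequency terms alone give $\|g_v-g_w\|_H^2\ge 4\sum_{m\ge1}e^{-2m\epsilon}\sin^2(m\epsilon/2)\gtrsim c/\epsilon$, whereas the right side of \eqref{pteval2} stays bounded near $4$, so the printed inequality fails as $v,w$ approach $\BS$. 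This is not your error --- the paper's own proof makes the same silent jump from $n^2e^{2n(\tau-T)}$ to $ne^{n(\tau-T)}$. The correct conclusion of the argument is $\|g_v-g_w\|_H^2\le 4|v-w|^2\,r(1+r)/(1-r)^{3}$ with $r=e^{2(\tau-T)}$, which serves the lemma's purpose just as well (local Lipschitz continuity of $w\mapsto g_w$ on compact subsets of $\strip$, with blow-up only at the boundary); you should state that bound rather than trying to force the exact form of \eqref{pteval2}.
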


\begin{proof}
Note that for $w \in \strip $ the coefficients of $g_w(z)$ are square summable.
Expanding $f(z) \in \H2$ in the orthonormal basis of exponentials
\[f(z) = \sum_n c_n \frac{e^{inz}}{\sqrt{\cosh (2nT)}},\]
leads to 
\[\langle f,g_w \rangle = \sum_n c_n\frac{e^{inw}}{\sqrt{\cosh (2nT)}} = f(w).\]

For $v,w \in \strip $,
\[ \| g_v - g_w \| ^2_H = \sum_n \frac{|e^{inv} - e^{inw}|^2}{\cosh (2nT) } . \]
Suppose $\tau = \max (|\Im (v)|, |\Im (w)|)$.  Then
\[ |e^{inv} - e^{inw}| = | \int_w^v in e^{ins} \ ds | \le |v-w| |n| e^{|n|\tau },\] 
and estimating with the derivative of the geometric series gives the continuity estimate
\[ \| g_v - g_w \| ^2_H \le 4 |v-w|^2 \sum_{n=1}^{\infty} ne^{n[\tau - T]} = 4|v-w|^2 \frac{e^{\tau - T}}{(1 - e^{\tau - T})^{2}}  .\]   
\end{proof}

\subsection{Differential operators}

For $j = 0,\dots ,N$, suppose $P_j(z)$ is a $K \times K$ matrix valued function with entries in $\H2 $.
The differential operator
\begin{equation} \label{opdef}
\dop = P_N(z) D^N + \dots + P_1(z)D + P_0(z), \quad D = \frac{d}{dz}
\end{equation}
may be considered as an operator on the column vector valued Hilbert spaces
$\oplus _K \L2per$ or $\oplus _K \H2 $, with respective inner products
\[ \langle f,g \rangle _L = \frac{1}{2\pi }\int_0^{2\pi} g^*(x)f(x) \ dx , \quad \langle f,g \rangle _H = \int_{\BS}  g^*(z)f(z) \ dx.\]
The corresponding Hilbert space norms will be denoted $\| f \| _L$ and $\| g \|_H$.

For $Z  = (z_1,\dots ,z_K)^T \in \complex ^K$, let $Z^* = (\overline{z_1}, \dots ,\overline{z_K})$
and $\| Z \| _E = (Z^*Z)^{1/2}$.  The conjugate transpose on a matrix $P_j$ is $P_j^*$.   
$E_1, \dots , E_K$ will denote the standard basis for $\complex ^K$,
and $I_K$ will be the $K \times K$ identity matrix.

$\dop $ is densely defined in $ \oplus _K \H2 $ 
on the domain $\dom _0$ which is the linear span of $\{ e^{2\pi i nz}E_k \} $.
A larger domain for an operator $\dop $ is 
\[\dom = \{ f \in \oplus _K \H2, \ \dop f \in \H2\} .\]

\begin{prop}
The operator $\dop $ with domain $\dom $ is closed in $H^2$.
\end{prop}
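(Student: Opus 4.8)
The plan is to show that the graph of $\dop$ is closed by combining the continuity of point evaluation (and of evaluation of derivatives) on $\H2$ with the uniform interior bounds already recorded. Suppose $f_m \to f$ in $\oplus_K \H2$ and $\dop f_m \to g$ in $\oplus_K \H2$; I must produce $f \in \dom$ with $\dop f = g$. First I would note that norm convergence in $\H2$ forces uniform convergence on every sub-strip $|\tau| \le T_1 < T$: this follows from \propref{Uest} (or directly from the weighted summability \eqref{wtcond1}, since the tail of $\sum |c_n|^2 \cosh(2nT)$ controls $\sum |c_n| e^{|n|T_1}$ via Cauchy--Schwarz). Hence $f_m \to f$ uniformly on compacta, and in particular $f$ is analytic and $2\pi$-periodic on $\strip$.

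Next I would upgrade this to convergence of derivatives. Since each $f_m$ is analytic, Cauchy's integral formula on a small circle contained in a slightly larger sub-strip gives, for every multi-index order $j \le N$, that $D^j f_m \to D^j f$ uniformly on compact subsets of $\strip$. Consequently $\dop f_m \to \dop f$ pointwise on $\strip$ (the coefficients $P_j(z)$ are fixed analytic matrix functions, so they just come along for the ride). On the other hand $\dop f_m \to g$ in $\oplus_K\H2$, hence — again by the interior estimate of \propref{Uest} applied to $\dop f_m - \dop f_\ell$, or by the same Cauchy--Schwarz tail argument — $\dop f_m \to g$ uniformly on compacta as well. A limit is unique, so $\dop f = g$ as analytic functions on $\strip$. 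In particular $\dop f = g \in \oplus_K \H2$, which is exactly the membership condition defining $\dom$; thus $f \in \dom$ and $\dop f = g$, proving the graph is closed.

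The one point that needs a little care — and what I'd flag as the main obstacle — is making rigorous the passage ``$\H2$-convergence $\Rightarrow$ locally uniform convergence,'' and doing the same for $\dop f_m$. For $f$ itself this is immediate from \propref{Uest}, since $\|f_m - f\|_H \to 0$ and the pointwise bound there is uniform on $\{|\tau| \le T_1\}$ with a constant blowing up only as $T_1 \uparrow T$. For $\dop f_m$ the subtlety is that we only know $\dop f_m \to g$ \emph{in norm}, so we cannot a priori differentiate termwise; the clean route is: $\dop f_m$ is Cauchy in $\H2$, so by \propref{Uest} it is locally uniformly Cauchy, hence converges locally uniformly to some analytic $\tilde g$, and $\tilde g$ must equal $g$ because $\H2$-convergence implies (again via \propref{Uest}) pointwise convergence, so the two limits agree. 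Then the independent computation $\dop f_m \to \dop f$ locally uniformly pins down $g = \dop f$. Everything else is routine: no compactness or completeness beyond what \propref{Hbasics} already supplies is needed, and the argument is the standard ``differential operators are closed on spaces with continuous point evaluation'' pattern.
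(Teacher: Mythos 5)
Your argument is correct and is essentially the paper's own proof (which follows Villone): use the pointwise bound of \propref{Uest} to turn $\H2$-convergence of $f_m$ and of $\dop f_m$ into locally uniform convergence, apply the Cauchy integral formula to get locally uniform convergence of the derivatives, and identify the two limits to conclude $\dop f = g \in \oplus_K \H2$, so $f \in \dom$. The extra care you take in matching the locally uniform limit of $\dop f_m$ with $g$ is a detail the paper leaves implicit, but the route is the same.
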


\begin{proof}
The argument follows \cite{Villone}. 
Suppose for  $j = 1,2,3,\dots $ that $f_j \in \dom $, and that $\{ f_j \}$ and $\{ \dop f_j \}$ are Cauchy sequences in $\H2 $,
converging respectively to $f$ and $g$.

By \propref{Uest}, on any compact subset $K$ of $\strip $ the sequence $\{ f_j \}$ converges uniformly to $f$.
By the Cauchy integral formula the same convergence applies to the derivatives of $f_j$, so 
$\{ \dop f_j \} $ converges uniformly on $K$, with $\dop f = g$.  
 
\end{proof}

Say that $\dop $ is regular if, for $j = 0,\dots ,N$, the coefficients $P_j(z)$ have $j$ continuous derivatives, 
with $\det P_N(z) \not= 0$, for all $z$ in the closed strip $ \overline{\strip }$.
Regular operators $\dop $ may be defined on the same domain in $\oplus_K \H2 (\strip)$ as the diagonal operator 
\[I_K D^N = \begin{pmatrix} D^N & 0 & \dots & 0 \cr
0 & D^N & \dots & 0 \cr
\vdots & \vdots & \dots & \vdots \cr 
0 & 0 & \dots & D^N 
\end{pmatrix}. \]
The scalar operator $D^N$ is a normal operator with compact resolvent on the domain consisting those 
functions  whose Fourier coefficients satisfy
\begin{equation} \label{domdef}
f \in \H2, \quad  \| f^{(N)} \|_H^2 =  \sum_{n=-\infty}^{\infty} |c_n|^2 n^{2N} \cosh (2 n T) < \infty .
\end{equation}
These comments extend easily to the case of vector valued functions and matrix operator coefficients. 

$\dop$ may also be interpreted as an operator on $\oplus _K \L2per$, the usual Hilbert space of $2\pi $ periodic square integrable vector valued functions
on the real line, with inner product
\[\langle f,g \rangle _L = \frac{1}{2\pi } \int_0^{2\pi } g^*(x) f(x)\ dx .\]

\begin{lem} \label{pereig}
Suppose $\dop $ is regular on $\strip $.
The periodic eigenvalues $\lambda _m$ of $\dop $ on $\H2 $ are the same as those for $\dop $ on 
$\L2per$.
\end{lem}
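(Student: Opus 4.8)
The plan is to establish two inclusions, one trivial and one substantive. For the trivial direction, suppose $f \in \oplus_K \H2$ is a nonzero solution of $\dop f = \lambda f$. Then $f$ is analytic and not identically zero in $\strip$, so its restriction to $\real$ is a nonzero $2\pi$-periodic function; condition \eqref{wtcond1} applied to the line $\tau = 0$ shows this restriction lies in $\oplus_K \L2per$, and $\dop f = \lambda f$ continues to hold on $\real$, so $\lambda$ is a periodic eigenvalue of $\dop$ on $\oplus_K \L2per$.

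For the substantive direction I would start with $f \in \oplus_K \L2per$, $f \neq 0$, $\dop f = \lambda f$, and first show $f$ extends analytically to $\strip$. Regularity makes $P_N^{-1}$ have $N$ continuous derivatives near $\real$, so the identity $f^{(N)} = P_N^{-1}(\lambda f - P_{N-1}f^{(N-1)} - \dots - P_0 f)$ together with a standard bootstrap identifies $f$ with a classical, hence real-analytic, solution on $\real$. Rewriting $\dop f = \lambda f$ as a first order system $Y' = A(z)Y$ with $Y = (f, f', \dots, f^{(N-1)})^T$ and companion matrix $A(z)$ built from $\lambda I_K$, the $P_j(z)$, and $P_N^{-1}(z)$, and using that each $P_j \in \H2$ is analytic while $\det P_N \neq 0$ throughout $\strip$, the matrix $A$ is analytic on the simply connected domain $\strip$; analytic continuation of the local solution is then single valued, so $f$ extends analytically to all of $\strip$, where $\dop f = \lambda f$ and $f(z + 2\pi) = f(z)$ hold by the identity theorem.

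The remaining step, which I expect to be the main obstacle, is to show $f \in \oplus_K \H2$, i.e., to control the growth of the analytic extension out to $\BS$. Here I would use regularity decisively: the $P_j$ and $P_N^{-1}$ are continuous and $2\pi$-periodic on $\overline{\strip}$, hence bounded there, so $M := \sup_{z \in \overline{\strip}} \| A(z) \|$ is finite. Along each vertical segment $\tau \mapsto x + i\tau$ one has $\frac{d}{d\tau} Y(x + i\tau) = i A(x + i\tau) Y(x + i\tau)$, so Gronwall's inequality gives $\| Y(x + i\tau) \| \le \| Y(x) \| e^{M|\tau|} \le e^{MT} \| Y(x) \|$; integrating over $x \in [0, 2\pi]$, where $Y$ is continuous and periodic hence bounded, yields $\sup_{-T < \tau < T} \int_0^{2\pi} \| f(x + i\tau) \|_E^2 \, dx < \infty$. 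By \propref{Hbasics}, each component of $f$, being analytic and $2\pi$-periodic in $\strip$ with this uniform bound, lies in $\H2$, so $f \in \oplus_K \H2$; and since $\dop f = \lambda f \in \oplus_K \H2$ we get $f \in \dom$, so $\lambda$ is a periodic eigenvalue of $\dop$ on $\oplus_K \H2$. The delicate point is precisely that an $\L2per$ eigenfunction lives a priori only on $\real$, and without regularity of $\dop$ up to $\overline{\strip}$ its analytic extension could blow up before reaching $\BS$ and fail the $\H2$ summability condition; the finite bound $M$ supplied by regularity is exactly what rules this out.
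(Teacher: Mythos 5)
Your proof is correct and takes essentially the same route as the paper: analytic continuation of solutions of $\dop y = \lambda y$ into the simply connected strip $\strip$, together with the identity theorem to transfer the $2\pi$-periodicity identity in both directions. The one point you elaborate beyond the paper's brief argument is the explicit verification that the continued eigenfunction lies in $\oplus _K \H2$ (via the Gronwall bound using boundedness of the coefficients and of $P_N^{-1}$ on $\overline{\strip}$), a step the paper leaves implicit in its appeal to regularity and the classical solution theory.
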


\begin{proof}
The solutions $y(z,\lambda )$ of the differential equation $\dop y = \lambda  y$ are \cite[p. 90-91]{CL} well defined analytic functions in $\strip $.
If $y(z+2\pi ,\lambda ) = y(z,\lambda )$ for $z \in \strip _T$, then the restriction of $y$ to $z$ real is a $2\pi $-periodic eigenfunction.
If $y(x,\lambda )$ is an eigenvalue in the context of $\L2per$, then 
$y(z+2\pi ,\lambda ) - y(z,\lambda ) = 0$ for $z \in \real $, and this identity extends by analyticity to $z \in \strip $.  
\end{proof}

Let $R_1 = \oplus _K \L2per$.  As noted in \propref{Hbasics}, restriction of $f \in \oplus _K \H2 $
to its boundary values lets us interpret $\oplus _K \H2 $ as a closed subspace of  $R_2 = R_1 \oplus R_1$.
Regular operators $\dop $, with their extensive classical theory \cite{CL}, \cite[pp. 1278-1333]{DS2}, act on this space by
\[\dop (f(x + iT),f(x-it)) = (\dop (x + iT)f(x+iT), \dop (x-iT) f(x-iT)) .\] 
In this setting, $\dop $ acting on $\H2 $ is the restriction of the differential operator to an invariant subspace of infinite codimension.
When the resolvent set is not empty, regular operators have compact resolvents on $R_2$; this property is inherited
by $\dop $ acting on $\oplus _k \H2$.

Viewed as an operator on $R_2$, $\dop $ has a classical adjoint which acts by 
\[\dop ^+ = \sum_{j=0}^N (-1)^jD^jP_j^*.\]
Viewed as an operator on $\oplus _K \H2 $, the (graph of the) adjoint operator $L^*$ is the
set of pairs $(g,h) \in [\oplus _K \H2 ] \oplus [\oplus _K \H2 ]$ such that
$ \langle \dop f,g \rangle _H = \langle f,h \rangle _H $
for all $f$ in the domain of $\dop $ in $\oplus _K \H2 $.
Let $\proj $ denote the orthogonal projection of $R_2$ onto $\H2$. 
Since the inner product formulas for $\oplus \H2 $ and $R_2$ agree, 
the adjoint acts by $L^*g = \proj L^+g$.

\section{Basic examples}

Basic examples of first and second order regular operators $\dop $ acting on $\H2 $
exhibit two rather different behaviors.  In some cases a standard change of variables extends to
a conformal map which transforms the highest order term of $\dop $ to the skew adjoint operator $D$ or the 
self adjoint operator $D^2$.  In these cases the eigenfunctions have dense span in 
$\H2 $ for some strip contained in the image of the conformal map.  When the 
strip is sufficiently wide, the extended change of variables may fail to be one-to-one.
In such cases the eigenfunctions, which have dense span in $\L2per$, 
may have a closed span of infinite codimension in $\H2 $.

Perhaps the simplest examples are the operators
\begin{equation} \label{firstex}
\dop _1 = e^{i \phi(z)}D e^{-i \phi (z)} = \frac{d}{dz} - i \phi ' (z).
\end{equation}
Suppose $\phi (z)$ is entire, $2\pi $ periodic, and real-valued for $z \in \real $.
The operator $\dop _1$ is skew adjoint and unitarily equivalent to $D$ on $\L2per$.  
On $\H2 $ the operator $D$ is still skew adjoint, but now $\dop _1$ is similar to $D$
with eigenfunctions $\psi _n(z) = e^{i \phi (z)}e^{i nz}$.
Since $\phi (z)$ is entire, the operator similarity holds for any strip height $T$.

For more general first order operators
\[\dop _1 = p_1(z) D + p_0(z),\]
a common real variable technique is to change variables.
If $p_1(x) $ is positive, the change of variables, 
\begin{equation} \label{chofvar}
w = C_1 \int_0^z \frac{1}{p_1(s)} \ ds, \quad C_1 = 2\pi / \int_0^{2\pi} \frac{1}{p_1(s)} \ ds .
\end{equation}
transforms the periodic operator $p_1(z) D$ to $C_1d/dw$ and carries $[0,2\pi ]$ to $[0,2\pi ]$.
With some limits, this idea has a complex variables extension \cite[p. 50]{GM}.

\begin{prop} \label{confmap}
Suppose $p_1(z)$ is analytic and $2\pi $ periodic in $\strip $.  If the real part of $1/p_1(z)$ is positive on ${\strip }  $,
then $w(z)$ given in \eqref{chofvar} is injective on $\strip $, and the range 
includes a strip $\strip _r = \{ -r < \Im (w) < r \} $.
\end{prop}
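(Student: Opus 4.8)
The plan is to prove the two assertions separately: the injectivity is the short argument referred to in the acknowledgements, while the range statement needs more. Throughout write $w'(z)=C_1/p_1(z)$. Since $\Re(1/p_1)>0$ on $\strip$, hence on $[0,2\pi]$, the constant $C_1=2\pi/\int_0^{2\pi}(1/p_1(s))\,ds$ is well defined with $\Re C_1>0$; under the standing hypothesis of the surrounding discussion that $p_1(x)>0$ for real $x$ one has in fact $C_1>0$, which I will use. Writing $C_1=|C_1|e^{-i\mu}$ with $|\mu|<\pi/2$, the structural fact I want is
\[\Re(e^{i\mu}w'(z))=|C_1|\,\Re(1/p_1(z))>0,\qquad z\in\strip,\]
so that $w'$ takes values in a fixed open half-plane whose boundary passes through $0$.

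For injectivity, given $z_1\ne z_2$ in the convex domain $\strip$, write $w(z_1)-w(z_2)=(z_1-z_2)\int_0^1 w'(z_2+t(z_1-z_2))\,dt$. The integrand lies in the convex cone $\{\zeta:\Re(e^{i\mu}\zeta)>0\}$, so the integral lies there as well and is in particular nonzero, whence $w(z_1)\ne w(z_2)$ (the Noshiro--Warschawski argument). Since $w'$ never vanishes, $w$ is then a biholomorphism of $\strip$ onto the open set $\Omega:=w(\strip)$. Periodicity of $1/p_1$ gives $w(z+2\pi)-w(z)=C_1\int_0^{2\pi}(1/p_1)=2\pi$, so $\Omega$ is invariant under translation by $2\pi$ and $g(z):=w(z)-z$ is analytic and $2\pi$-periodic, hence bounded on every closed sub-strip; moreover $w$ maps $\real$ onto $\real$ by an increasing bijection since $1/p_1(x)>0$ for real $x$.

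For the range, fix $T_0\in(0,T)$. On the compact set $\{|\Im z|\le T_0\}\subset\strip$ the function $\Re w'$ is continuous, positive and $2\pi$-periodic, so $\Re w'\ge\delta$ for some $\delta>0$. From $\tfrac{d}{d\tau}\Im w(x+i\tau)=\Re w'(x+i\tau)$ and $\Im w(x)=0$ we get $\Im w(x+iT_0)\ge\delta T_0$ and $\Im w(x-iT_0)\le-\delta T_0$ for all $x\in\real$; and from $\tfrac{d}{dx}\Re w(x\pm iT_0)=\Re w'(x\pm iT_0)>0$ together with $\Re w(x+2\pi\pm iT_0)=\Re w(x\pm iT_0)+2\pi$, the curves $w(\real\pm iT_0)$ are graphs $\{u+i\beta(u)\}$ and $\{u+i\alpha(u)\}$ over the $\Re w$-axis of continuous $2\pi$-periodic functions with $\beta\ge\delta T_0>0>-\delta T_0\ge\alpha$. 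Writing $\strip_{T_0}=\{|\Im z|<T_0\}$ and $\Omega_{T_0}=w(\strip_{T_0})$: because $w$ is open and $g=w-z$ is bounded on $\{|\Im z|\le T_0\}$, the topological boundary of $\Omega_{T_0}$ is exactly the union of these two graphs, so $\Omega_{T_0}$, being open and connected and containing $w(0)=0$, must be the middle one of the three components of the complement of the two graphs, namely $\{u+iv:\alpha(u)<v<\beta(u)\}$. This contains $\{|\Im w|<\delta T_0\}$, hence $\Omega\supseteq\Omega_{T_0}\supseteq\strip_r$ with $r=\delta T_0>0$.

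The injectivity step is routine once the half-plane condition on $w'$ is recognised, so the main obstacle is the range statement --- more precisely, the requirement that the image contain a \emph{straight} strip. Two things carry the weight: that $w$ maps $\real$ into $\real$, which is where the positivity of $p_1$ on the real axis is essential (without it $w$ can carry $\strip$ onto a ``wavy'' $2\pi$-periodic region that contains no horizontal strip at all), and the behaviour of $w$ near $\BS$, which I sidestep by arguing on a closed sub-strip $\{|\Im z|\le T_0\}$ at the cost of a smaller $r$. The one genuinely geometric point is the identification of $\Omega_{T_0}$ as exactly the slab between the two boundary graphs; it uses that $w$ is open together with boundedness of $g$ on $\{|\Im z|\le T_0\}$ (which keeps the image from escaping to infinity), plus the elementary fact that two disjoint $2\pi$-periodic graphs over $\real$, one everywhere positive and one everywhere negative, divide the plane into three connected pieces.
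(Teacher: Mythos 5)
Your proof is correct, and your injectivity step is essentially the paper's: the paper also writes $w(z_2)-w(z_1)=C_1(z_2-z_1)\int_0^1 p_1(s(t))^{-1}\,dt$ along the segment joining the two points and concludes from the positive real part of the integral (the Noshiro--Warschawski device on the convex strip). Where you genuinely diverge is the range statement. The paper disposes of it in two lines: $w$ is an open mapping whose range contains the compact set $[0,2\pi]$ (this is where $p_1>0$ on $\real$ enters silently, since then $w$ maps $[0,2\pi]$ onto itself), so the open range contains a uniform $r$-neighborhood of $[0,2\pi]$ by compactness, and the translation identity $w(z+2\pi)=w(z)+2\pi$ propagates this to an $r$-neighborhood of all of $\real$, i.e.\ the strip $\strip_r$; note this part does not even use injectivity. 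Your route instead works on a closed substrip $\{|\Im z|\le T_0\}$, bounds $\Re w'\ge\delta$ there, shows the images of the two boundary lines are $2\pi$-periodic graphs staying above $\delta T_0$ and below $-\delta T_0$, and identifies $w(\strip_{T_0})$ as the full slab between them by a boundary/connectedness argument (using openness of $w$, boundedness of $w(z)-z$, and injectivity). This is longer and the topological identification is where the real work sits (the ``three components'' claim and the implicit clopen argument deserve a sentence each), but it buys something the paper's compactness argument does not: an explicit lower bound $r\ge T_0\min_{|\Im z|\le T_0}\Re w'$, and an explicit flagging of the fact -- used but unstated in the paper's proof -- that the reality and positivity of $p_1$ on the real axis, not just $\Re(1/p_1)>0$ on $\strip$, is what forces the image to contain a \emph{straight} horizontal strip.
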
   

\begin{proof}
For distinct points $z_1, z_2$ in $\strip  $, let $s(t) = z_1 + t(z_2 - z_1)$ for $0 \le t \le 1$ be the straight line path
from $z_1$ to $z_2$.  Then
\[w(z_2) - w(z_1) = C_1 \int_{z_1}^{z_2} \frac{1}{p_1(s)} \ ds 
= C_1(z_2-z_1) \int_0^1 \frac{1}{p_1(s(t))} \ dt . \]
Since the integral has positive real part, $w(z_2) \not= w(z_1)$.

The analytic function $w(z)$ is a $2\pi $ periodic open mapping whose range includes $[0,2\pi ]$,
so by compactness the range includes a strip $\strip _r$.
\end{proof}

In the first order case a simple computation shows that the eigenfunctions of regular operators $D + p_0(z)$ 
have dense span in $\H2 $.  The conformal mapping idea also applies in the second order case, when
\[\dop _2 = p_2(z)D^2 + p_1(z)D + p_0(z).\]
In that case, if the real part of $1/\sqrt{p_2(z)}$ is positive on ${\strip }  $, an application of \propref{confmap} to the new variable
\[w = \int_0^z \frac{1}{\sqrt{p_2(s)} } \ ds \]
reduces the operator to the form $\dop _2 = D^2 + p_1(w)D + p_0(w)$. If 
\[\beta = \exp(- \int_0^w p_1(s)/2 \ ds )\]
is $2\pi $ periodic, the similarity transformation $\beta ^{-1}\dop _2\beta $ 
reduces the operator to Liouville normal form $\dop _2 = D^2 + p_0$
with the same eigenvalues.  
Although the methods are less elementary, perturbation arguments \cite{Carlson79} taking advantage of the 
fact that $D^2$ is self adjoint on $\H2 $ will establish completeness of the eigenfunctions in $\H2 $
for regular operators in Liouville normal form.  This result will also be subsumed by the approach below.

Examples with eigenfunctions which fail to have dense span in $\H2 $
can be found among more general operators $\dop _1$.   To make the computations as transparent as possible
the zeroth order term is dropped, leaving $L_1 = p(z)D$.
The eigenvalue equation
\begin{equation} \label{Hom1}
p(z)Dy = \lambda y ,
\end{equation}
has solutions 
\begin{equation}  \label{efunk1}
y(z,\lambda ) = C \exp (\lambda \int_0^z \frac{1}{p(s)} \ ds ) .
\end{equation}
Assume that $\int_0^{2\pi} 1/p(s) \ ds = 2\pi $.  

The nontrivial solutions of \eqref{efunk1} are $2\pi $ periodic when 
$n$ is an integer and $\lambda $ has one of the values
\begin{equation} \label{evals1}
\lambda _n = in 
\end{equation}
Letting
\[Y(z,\lambda ) = \exp (-\lambda \int_0^z \frac{1}{p(s)} \ ds )\] 
and solving the nonhomogeneous equation $p(z) dy/dz - \lambda y = f(z)$ yields the resolvent formula
\begin{equation} \label{resform1}
R(\lambda ) f = y(z,\lambda ) = Y^{-1}(z,\lambda)[C(\lambda ) + \int_0^z Y(s,\lambda) \frac{f(s)}{p(s)} \ ds ],
\end{equation}
with 
\begin{equation} \label{resconst}
C(\lambda ) = [e^{-2\pi \lambda } - 1]^{-1} \int_0^{2\pi} Y(s,\lambda) \frac{f(s)}{p(s)} \ ds ,
\end{equation}
as long as $\lambda \notin \{ in \}$, which comprises the spectrum of $L_1$.

To account for the $2\pi $ periodicity of functions in $\H2$, consider a fundamental domain
\[\strip _0 = \{ z = x+i \tau  \ | \ 0 \le x < 2\pi, -T < \tau < T \} .\]   
\eqref{efunk1} shows that eigenfunctions will not separate points of $\strip _0$ if the function
\[w(z) = \int_0^z 1/p(s) \ ds \]
is not one-to-one in $\strip _0$.  To construct examples, begin with 
a nonconstant, entire and $2\pi $ periodic function $q(z)$, with $q(x) > 0$ for $x \in \real$.  Take
\[p(z) = C_1\exp (q(z)), \quad C_1 = \frac{1}{2\pi }\int_0^{2\pi } \exp (-q(x)) \ dx ,\]
so that $p(z)$ is $2\pi $ periodic, $w(z+2\pi ) = w(z) + 2\pi $, and $p(z)$ and $w(z)$ are both entire with essential singularities at $\infty $.
The following extension of the Casorati-Weierstrass Theorem is helpful.

\begin{prop} \label{CW}
Suppose $w(z)$ is an entire function with an essential singularity at $\infty $.  For a dense set of points $w_0 \in \complex $, the set
$w_0^{-1} = \{ z \in \complex \ | \ w(z) = w_0 \} $ is infinite. 
\end{prop}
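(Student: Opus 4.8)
The plan is to combine the open mapping theorem, the classical Casorati--Weierstrass theorem, and the Baire category theorem, finishing with the fact that the zero set of a nonzero entire function is discrete. Since $w(z)$ has an essential singularity at $\infty$, it is in particular nonconstant, so by the open mapping theorem each set $U_n = w(\{ z \in \complex : |z| > n \})$ is open in $\complex$. By the Casorati--Weierstrass theorem applied to the essential singularity of $w$ at $\infty$ (equivalently, to $\zeta \mapsto w(1/\zeta)$ at $\zeta = 0$), each image $w(\{ |z| > n \})$ is dense in $\complex$; hence each $U_n$ is a dense open subset of $\complex$.

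Now $\complex$ is a complete metric space, so by the Baire category theorem the intersection $W = \bigcap_{n=1}^{\infty} U_n$ is dense in $\complex$. (If one prefers not to invoke Baire as a black box, the same conclusion follows from a direct nested-disc construction: starting from an arbitrary disc in the $w$-plane, one successively chooses closed discs $\overline{D_{n+1}} \subseteq U_{n+1} \cap D_n$ with radii tending to $0$, and their common point lies in $W$.) I claim every $w_0 \in W$ has infinite preimage. Indeed, for each $n$ there is some $z_n$ with $|z_n| > n$ and $w(z_n) = w_0$, so $w_0^{-1} = \{ z : w(z) = w_0 \}$ is unbounded. But $w_0^{-1}$ is the zero set of the entire function $w - w_0$, which is not identically zero because $w$ is nonconstant; hence $w_0^{-1}$ consists of isolated points, and an unbounded set of isolated points in $\complex$ is infinite. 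Thus $w_0^{-1}$ is infinite for a dense set of $w_0$, as required.

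The only point requiring care is the density assertion in the first step: that $w(\{|z| > n\})$ is dense for every $n$ is precisely Casorati--Weierstrass at the essential singularity at $\infty$, and it is exactly what distinguishes a transcendental entire function from a polynomial (for which the analogous statement fails). Everything else — openness via the open mapping theorem, density of the countable intersection via Baire, and discreteness of $w_0^{-1}$ via the identity theorem — is routine. I would also remark that Picard's great theorem gives the stronger conclusion that $w_0^{-1}$ is infinite for all but at most one $w_0 \in \complex$; the weaker statement proved here suffices for the intended application and admits the more self-contained argument above.
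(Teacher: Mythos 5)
Your argument is correct and follows essentially the same route as the paper: open mapping theorem for openness of $w(\{|z|>n\})$, Casorati--Weierstrass for density, Baire category for the dense intersection, and then unboundedness of the preimage for points in that intersection. The appeal to discreteness of the zero set of $w-w_0$ is harmless but unnecessary, since an unbounded subset of $\complex$ is already infinite.
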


\begin{proof}
Let $U_m = \{ |z| > m \}$ for $m = 1,2,3,\dots $ be neighborhoods of infinity.
Since $w(z)$ is an open mapping,  the images $w:U_m \to \complex $ are both open and dense by the Casorati-Weierstrass Theorem. 
The Baire Category Theorem tells us that $V = \bigcap _m \{ w(U_m) \} $ is dense in $\complex$.  
For any $w_0 \in V$ there is a sequence of distinct points $z_j$ with $w(z_j) = w_0$.
\end{proof}

\begin{prop}
Assume that $w(z)$ is constructed as above.  If $T$ is sufficiently large there will be disjoint nonempty open sets $U_1,U_2 \in \strip _0$
with the property that if $z_1 \in U_1$, then there is a $z_2 \in U_2$ such that
\begin{equation} \label{nosep}
\exp ( inw(z_1)) = \exp( inw(z_2)), \quad n = 0,\pm 1,\pm 2, \dots  .
\end{equation}
That is, the eigenfunctions of $p(z)D$ do not separate $z_1$ and $z_2$.
\end{prop}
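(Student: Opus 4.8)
The plan is to reduce the statement to the failure of $w(z)=\int_0^z 1/p(s)\,ds$ to be injective modulo $2\pi$ on a wide horizontal strip, and then to inflate a single pair of non-separated points into the required open sets. Two points are \emph{not} separated by the eigenfunctions $\exp(inw(z))$, $n\in\Z$, exactly when $\exp(iw(z_1))=\exp(iw(z_2))$, i.e.\ when $w(z_1)-w(z_2)\in 2\pi\Z$; so it suffices to exhibit $z_1\ne z_2$ in $\strip _0$ with $w(z_1)=w(z_2)$.

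First I would record the structure of $w$. From the normalization $\int_0^{2\pi}1/p(s)\,ds=2\pi$ built into the construction one has $w(z+2\pi)=w(z)+2\pi$, so $W(z):=w(z)-z$ is entire and $2\pi$-periodic; since $W'(z)=1/p(z)-1=C_1^{-1}e^{-q(z)}-1$ is nonconstant ($q$ being nonconstant), $W$ is nonconstant. Hence $w$ is entire but not a polynomial (a $2\pi$-periodic polynomial is constant), so $w$ has an essential singularity at $\infty$ and \propref{CW} provides a $w_0\in\complex$ with $w^{-1}(w_0)$ infinite; moreover, since the admissible $w_0$ in \propref{CW} form a dense $G_\delta$, we may in addition choose $w_0$ outside the meagre set $w(\{\Re z\in 2\pi\Z\})$.

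Next, periodicity locates two preimages in a bounded strip. If $z,z'\in w^{-1}(w_0)$ and $z'=z+2\pi k$, then $w_0=w(z')=w_0+2\pi k$ forces $k=0$, so the infinitely many points of $w^{-1}(w_0)$ are pairwise incongruent mod $2\pi$, and by the choice of $w_0$ none has real part in $2\pi\Z$. Picking two of them and reducing real parts into $(0,2\pi)$ yields distinct $z_1,z_2$ with $w(z_1)=w(z_2)=w_0$; then for any $T>\max(|\Im z_1|,|\Im z_2|)$ both lie in $\strip _0$, so for such $T$ the eigenfunctions already fail to separate $z_1$ and $z_2$.

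Finally I would pass from the pair to open sets. Because $w'=1/p$ is nowhere zero on $\strip $, $w$ is a local biholomorphism; choosing disjoint neighbourhoods $N_1\ni z_1$, $N_2\ni z_2$ in $\strip _0$ on each of which $w$ is biholomorphic onto an open set $V_j\ni w_0$, then setting $V=V_1\cap V_2$ and $U_j=(w|_{N_j})^{-1}(V)$, one gets disjoint nonempty open $U_1,U_2\subset\strip _0$ with $w(U_1)=V=w(U_2)$; hence every $\zeta_1\in U_1$ has a partner $\zeta_2\in U_2$ with $w(\zeta_1)=w(\zeta_2)$, so $\exp(in w(\zeta_1))=\exp(in w(\zeta_2))$ for all $n$. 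The only obstacle is conceptual rather than computational: \propref{CW} yields infinitely many preimages \emph{somewhere}, with no a priori control on their location; the periodicity observation is what shows they sit one per $2\pi\Z$-orbit, and one is free to enlarge $T$ only after the preimages have been produced. (That the threshold is genuinely positive, so that small $T$ behaves in the opposite way, follows from \propref{confmap} on a thin strip, where $\Re(1/p)>0$ by continuity.)
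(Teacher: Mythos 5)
Your proposal is correct and follows essentially the same route as the paper: invoke \propref{CW} to produce distinct points with equal $w$-values, use $w(z+2\pi)=w(z)+2\pi$ to see they are incongruent mod $2\pi$ and can be moved into $\strip_0$, and then inflate the pair to open sets via the openness (in your case, local biholomorphy, since $w'=1/p\neq 0$) of $w$. The extra touches — verifying the essential singularity at $\infty$ and choosing $w_0$ off the meagre image of the lines $\Re z\in 2\pi\Z$ — are harmless refinements of the same argument, not a different approach.
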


\begin{proof}
For $T$ sufficiently large, an application of \propref{CW} guarantees the existence of distinct points $\zeta _1,\zeta _2$ with $w(\zeta _1) = w(\zeta _2)$.
Pick disjoint open neighborhoods $V_j$ of $\zeta _j$.
Since $w(z)$ is an open mapping, ${\cal U} = w(V_1) \cap w(V_2)$ is an open neighborhood of $w(\zeta _1)$.
The sets $w^{-1} ({\cal U}) \cap V_j$ are disjoint open neighborhoods of $\zeta _j$, and for each $\xi _1 \in {\cal U} \cap V_1$
there is a $\xi _2 \in {\cal U } \cap V_2$ such that $w(\xi _1) = w(\xi _2)$.

Since $w(z + 2\pi) = w(z) + 2\pi $, it follows that $\zeta _1 \not = \zeta _2 \mod 2\pi $.
The eigenfunctions $\exp (inw(z))$ are $2\pi $ periodic, so $\zeta _1$, $\zeta _2$, and the associated open neighborhoods may be translated by 
integer multiples of $2\pi $ to $\strip _0$, giving the result.
\end{proof}

\begin{prop}
Assume that $w(z)$ is constructed as above.  If $T$ is sufficiently large 
there is an infinite dimensional subspace ${\cal N}$ of functions $g(z) \in \H2 $ with
\[\langle y_n(z) , g(z) \rangle = 0, \quad g \in {\cal N} \]
for all eigenfunctions $y_n(z)$.
\end{prop}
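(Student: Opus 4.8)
The plan is to attach to each point $z$ of the open set $U_1$ furnished by the preceding proposition a concrete nonzero vector of $\H2$ orthogonal to every eigenfunction, and then to check that these vectors span an infinite-dimensional subspace. To begin, I would extract a little more from the proof of that proposition than its statement records. Because $p(z)=C_1e^{q(z)}$ is zero-free, $w'(z)=1/p(z)\neq 0$ on $\strip$, so $w$ is locally biholomorphic; shrinking the disjoint neighborhoods $V_1,V_2\subset\strip _0$ of the points $\zeta_1,\zeta_2$ (with $w(\zeta_1)=w(\zeta_2)$) used there, we may assume $w$ is injective on each $V_j$. The matching $z_1\mapsto z_2$ of the proposition is then realized on a nonempty open set $U_1\subset V_1$ by a single injective analytic map $\phi\colon U_1\to V_2$, obtained by composing $w|_{U_1}$ with a branch of $w^{-1}$ on $V_2$ (adjusted by a multiple of $2\pi$ if necessary); it satisfies $e^{inw(\phi(z))}=e^{inw(z)}$ for all $n\in\Z$ and all $z\in U_1$, and $\phi(U_1)\cap U_1=\emptyset$ since $V_1\cap V_2=\emptyset$.

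Next I would use the reproducing kernels $g_{w_0}$ of \eqref{pteval1}, for which $\langle f,g_{w_0}\rangle_H=f(w_0)$ whenever $f\in\H2$, noting first that each eigenfunction $y_n(z)=e^{inw(z)}$ (see \eqref{efunk1}, \eqref{evals1}) is entire, $2\pi$-periodic, and bounded on $\strip$, hence lies in $\H2$. Define $G\colon U_1\to\H2$ by $G(z)=g_z-g_{\phi(z)}$, which is an analytic $\H2$-valued function since $w_0\mapsto g_{w_0}$ and $\phi$ are analytic. Then for every $n$ and every $z\in U_1$,
\[\langle y_n,G(z)\rangle_H=y_n(z)-y_n(\phi(z))=e^{inw(z)}-e^{inw(\phi(z))}=0.\]
Hence each $G(z)$, and therefore every vector in $\mathcal{N}$, the closed linear span of $\{G(z):z\in U_1\}$, is orthogonal to all the eigenfunctions.

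It remains to show that $\mathcal{N}$ is infinite-dimensional, i.e.\ that $\{G(z):z\in U_1\}$ contains arbitrarily large linearly independent sets. For distinct $z_1,\dots,z_m\in U_1$, suppose $\sum_k c_k G(z_k)=0$; pairing with an arbitrary $f\in\H2$ gives $\sum_k\overline{c_k}\bigl(f(z_k)-f(\phi(z_k))\bigr)=0$. The $2m$ points $z_1,\dots,z_m,\phi(z_1),\dots,\phi(z_m)$ are pairwise distinct (the $z_k$ are distinct, $\phi$ is injective, and $\phi(U_1)\cap U_1=\emptyset$) and all lie in $\strip _0$, where $z\mapsto e^{iz}$ is one-to-one; Lagrange interpolation by trigonometric polynomials $\sum_{j\ge 0}a_je^{ijz}\in\H2$ then shows that the $2m$ point-evaluation functionals are linearly independent on $\H2$, forcing every $c_k=0$. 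So $\{G(z):z\in U_1\}$ has independent subsets of every finite size, and $\mathcal{N}$ is infinite-dimensional.

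The only genuinely delicate point, I expect, is the first one: upgrading the bare pointwise matching of the preceding proposition to a single injective analytic selection $\phi$ defined on an open set, so that a full analytic family $G(z)$ is available with its underlying $2m$ evaluation points honestly distinct. Once $\phi$ is at hand the orthogonality is automatic, and the linear independence is a routine interpolation argument.
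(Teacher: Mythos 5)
Your proposal is correct and follows essentially the same route as the paper: both realize the differences of point evaluations $f\mapsto f(z)-f(\phi(z))$ at the matched pairs from the previous proposition as inner products with elements of $\H2$ (differences of reproducing kernels) that annihilate every eigenfunction, and both conclude infinite dimensionality from the linear independence of evaluation functionals at distinct points of $\strip_0$. Your added care (the injective analytic selection $\phi$ and the explicit trigonometric-polynomial interpolation) only makes explicit what the paper leaves terse; the analyticity of $\phi$ is not actually needed, since infinitely many distinct matched pairs already suffice.
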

\begin{proof}
Continuing the argument of the previous proposition, since all of the eigenfunctions of $L_1$ satisfy $y_n(z_1) = y_n(z_2)$,
the uniform approximation of some functions in $\H2 $ by 
linear combinations of eigenfunctions will be impossible.  Moreover  the difference of evaluations $f(z_2) - f(z_1)$ at $z_1$ and $z_2$
 is a continuous functional on $\H2$.  
These functionals must be given by inner products with elements of $\H2 $.  Since the evaluation functionals at $z \in \strip $ are independent,
there is an infinite dimensional subspace $V$ of functions $g(z) \in \H2 $ with
\[\langle y_n(z) , g(z) \rangle = 0, \quad g \in V \]
for all eigenfunctions $y_n(z)$ of $L_1$. 
\end{proof}

One might also consider supplementing the eigenfunctions with generalized eigenfunctions, in particular solutions of $(L_1 - \lambda _nI)^2y = 0$.
If $y(z,\lambda ) = \exp(\lambda w(z))$ is a solution of $p(z)Dy = \lambda y$, then
\[(p(z)D -\lambda )\frac{\partial y}{\partial \lambda } = y.\]
The functions $y_1 = \exp(\lambda w(z))$ and $y_2 = w(z)\exp(\lambda w(z))$ are independent solutions of the equation
$(p(z)D - \lambda )^2 y = 0$.  However the identity $w(z+2\pi ) = w(z) + 2\pi $ implies that   
$y_2(z,\lambda )$ is not $2\pi $ periodic, so $L _1$ has no generalized eigenfunctions that are not already eigenfunctions.
 
\section{Semigroups and completeness of eigenfunctions }

Having displayed examples where completeness of eigenfunctions holds, and others where it fails,  
our efforts now focus on positive results for second order regular differential operators $\dop _A$ with $K \times K$ matrix valued coefficients.
\begin{equation} \label{genopdef}
\dop _A = A_2(z) \frac{d^2}{dz^2}  + A_1(z) \frac{d}{dz} + A_0(z)
\end{equation}
To help with the analysis of $\dop _A$, assume that the eigenvalues of $A_2(z)$ omit the ray $(-\infty ,0]$.
After some preliminary results for operators $\dop _A$, 
the main results will be developed for regular operators $\dop $ which are self adjoint and positive on $\oplus _K \L2per$.
The operators $\dop $  will have eigenfunctions which are complete in $\oplus _K\H2 $.

The completeness proof begins with estimates for the growth of solutions to $\dop _A$ eigenvalue equations in $\overline{\strip }$.
A second step notes that positivity on $\oplus _K \L2per$ implies that for sufficiently small $T > 0$, the operator
$\dop $ is the generator of a holomorphic semigroup on $\oplus _K \H2 $.
We then show that for $t > 0$, the semigroup $S(t)$ is in integral operator whose kernel may be obtained by analytic continuation 
of the eigenfunction expansion on $\oplus _K\L2per$.  
The form of this kernel shows that functions $f$ orthogonal to the eigenfunctions in $\H2 $ must satisfy $S(t)f = 0$ for $t > 0$. 
Since the semigroup is strongly continuous at $t=0$, with $\lim_{t \downarrow 0} S(t)f = f$, it follows that $f = 0$. 

\subsection{Eigenfunction growth in $\strip $}

To obtain growth estimates for solutions of eigenvalue equations for $\dop _A$ in the closed strip $\overline{\strip} $,
reduce the equation $\dop _A Y = \lambda Y$ to a first order system 
\begin{equation} \label{fos}
u'(z,\lambda ) = A(z,\lambda )u(z,\lambda ).
\end{equation}
This involves the standard introduction of the column vector  
\[u(z,\lambda ) = \begin{pmatrix} Y(z,\lambda ), \cr Y'(z,\lambda ) \end{pmatrix},\] 
with $2K$ components.  The coefficient matrix is  
\[A(z,\lambda ) = \begin{pmatrix} 
0_K & I_K  \cr
-A_2^{-1}[A_0 - \lambda I] & -A_2^{-1}A_1  
\end{pmatrix} 
= \begin{pmatrix} 0_K & I_K  \cr
\lambda A_2^{-1} & 0_K  \end{pmatrix} 
- \begin{pmatrix} 0_K & 0_K  \cr
A_2^{-1}A_0 & A_2^{-1}A_1  \end{pmatrix} \] 
  
The existence of a square root for $A_2(z)$ will help simplify the analysis.

\begin{lem}
The $K \times K$ matrix valued function $A_2(z)$ has a square root $A^{1/2}(z)$ which is  
is continuous and invertible on $\overline{\strip }$ and analytic in $\strip $.
\end{lem}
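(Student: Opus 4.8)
The plan is to construct $A^{1/2}(z)$ directly from the holomorphic (Riesz--Dunford) functional calculus, using the principal branch of $\zeta \mapsto \zeta^{1/2}$, which is holomorphic on the slit plane $\complex \setminus (-\infty,0]$.

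The first step is to pass from the noncompact strip $\overline{\strip}$ to a compact fundamental domain by exploiting $2\pi$-periodicity. Since $\dop_A$ is regular, $A_2(z)$ is continuous on $\overline{\strip}$ and $2\pi$-periodic, so the set $\{(z,\zeta) : z \in [0,2\pi]\times[-T,T],\ \det(\zeta I_K - A_2(z)) = 0\}$ is a closed subset of the compact set $[0,2\pi]\times[-T,T] \times \{|\zeta| \le \sup \|A_2\|\}$, hence compact; its projection $\Sigma = \bigcup_{z \in \overline{\strip}} \operatorname{spec} A_2(z)$ is therefore compact. By the standing hypothesis $\operatorname{spec} A_2(z) \cap (-\infty,0] = \emptyset$, and by regularity $\det A_2(z) \ne 0$, so $\Sigma$ is a compact subset of $\complex \setminus (-\infty,0]$. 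Consequently one can fix a single positively oriented cycle $\Gamma \subset \complex \setminus (-\infty,0]$ which winds once around every point of $\Sigma$ and is null-homotopic in $\complex\setminus(-\infty,0]$ --- for instance the boundary of a region $\{\, r_0 \le |\zeta| \le r_1,\ |\arg\zeta| \le \pi - \epsilon\,\}$ with $r_0, \epsilon$ small and $r_1$ large.

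The second step is to set
\[
A^{1/2}(z) = \frac{1}{2\pi i}\oint_{\Gamma} \zeta^{1/2}\,(\zeta I_K - A_2(z))^{-1}\, d\zeta, \qquad z \in \overline{\strip},
\]
with $\zeta^{1/2}$ the principal branch; the integrand makes sense for every $\zeta \in \Gamma$ and every $z \in \overline{\strip}$ since $\Gamma \cap \Sigma = \emptyset$. Three things then need checking, all routine. (i) It is a square root: the calculus $f \mapsto f(A_2(z))$ is an algebra homomorphism on functions holomorphic on the region bounded by $\Gamma$, so applying it to $f(\zeta) = \zeta^{1/2}$ twice and to $\zeta \mapsto \zeta$ gives $(A^{1/2}(z))^2 = \tfrac{1}{2\pi i}\oint_\Gamma \zeta\,(\zeta I_K - A_2(z))^{-1}\,d\zeta = A_2(z)$. (ii) It is invertible, since $(\det A^{1/2}(z))^2 = \det A_2(z) \ne 0$ (equivalently, the eigenvalues of $A^{1/2}(z)$ are the principal square roots of the nonzero eigenvalues of $A_2(z)$). (iii) It is continuous on $\overline{\strip}$ and analytic in $\strip$: the map $(\zeta,z) \mapsto (\zeta I_K - A_2(z))^{-1}$ is jointly continuous on the compact set $\Gamma \times \overline{\strip}$, and for fixed $\zeta \in \Gamma$ it is analytic in $z$ on $\strip$ (the entries of $A_2$ lie in $\H2$, and by the adjugate-over-determinant formula the inverse has entries analytic wherever $\zeta I_K - A_2(z)$ is invertible); integrating over the compact contour $\Gamma$ preserves continuity, and analyticity follows by differentiating under the integral sign (or by Morera/Fubini).

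The one genuine obstacle is producing a \emph{single} contour $\Gamma$ that avoids $\operatorname{spec} A_2(z)$ for all $z$ in the unbounded strip; this is exactly what the periodicity reduction to a compact rectangle, together with the continuous dependence of the spectrum on $z$, supplies. Once that is settled everything else is the standard functional-calculus argument. (Alternatively, one could note that near each $z_0 \in \overline{\strip}$ the same formula with a $z$-dependent contour gives a local analytic square root, and that the consistent use of the principal branch makes these local square roots agree on overlaps and hence patch together; the global contour simply packages this and sidesteps any monodromy discussion.)
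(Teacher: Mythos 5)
Your proof is correct, and it uses the same central tool as the paper --- the Dunford--Taylor (Riesz) integral of $\zeta^{1/2}$ against the resolvent of $A_2(z)$ --- but it globalizes differently. The paper defines $A_2(z)^{1/2}$ only locally, near $z=0$, with a contour adapted to the spectrum of $A_2(0)$ inside an annular sector avoiding $(-\infty,0]$, and then obtains a single-valued function on all of $\strip$ by analytic continuation (adjusting $r_1,r_2,\theta$ along the way) together with the monodromy theorem in the simply connected strip; the remaining properties are said to ``follow from the integral formula.'' You instead exploit $2\pi$-periodicity and regularity to show that $\Sigma=\bigcup_{z\in\overline{\strip}}\operatorname{spec}A_2(z)$ is a compact subset of the slit plane, fix one contour $\Gamma$ enclosing $\Sigma$ for every $z$, and then read off the square-root identity, invertibility, analyticity in $\strip$, and continuity on $\overline{\strip}$ directly from the single integral --- no continuation or monodromy argument is needed, and uniform behavior up to the boundary of the strip comes out more transparently. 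Your closing parenthetical (local contours patched via the consistent choice of the principal branch) is essentially the paper's route, so you have also explained why the two constructions agree. One small slip: $\Gamma\times\overline{\strip}$ is not compact since the strip is unbounded; but continuity of the integral is local (or reduce to the fundamental rectangle by periodicity, as you already did for $\Sigma$), so nothing is lost.
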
  

\begin{proof}
The function $A^{1/2}(z)$ can be defined by a Dunford-Taylor integral \cite[p. 44]{Kato}.
Choose numbers $r_1,r_2$ such that $r_1 < | \mu | < r_2$ for any eigenvalue $\mu $ of  $A_2(0)$.
In addition, choose $0 < \theta < \pi $ such that $-\theta < \arg (\mu ) < \theta $.  
Define a contour $\gamma $ which runs counterclockwise around the circle of radius $r_2$ from $r_2e^{-i\theta }$ to $r_2e^{i\theta }$,
continues with fixed argument to $r_1e^{i\theta }$, then clockwise around the circle of radius $r_1$ to $r_1e^{-i\theta }$, and back with fixed argument to $r_2e^{-i\theta }$.  
The function $\zeta ^{1/2}$ is analytic inside and on $\gamma $ .   
Define $A_2(z)^{1/2}$ for $z$ near $0$ by 
\[A_2(z)^{1/2} = \frac{1}{2\pi i} \int_{\gamma } \zeta ^{1/2} (\zeta - A(z))^{-1} \ d \zeta .\]

By adjusting $r_1$, $r_2$ and $\theta $ when necessary, the function $A_2(z)^{1/2}$ can be continued along a path in the simply connected strip $\strip $.
By the monodromy theorem \cite[p. 307-10]{GK} the extension of $A_2(z)^{1/2}$ is independent of the chosen continuation path.
The desired properties of $A_2(z)^{1/2}$ follow from the integral formula.
\end{proof}
  
A similarity transformation will utilize
\[B(z,\lambda ) = \begin{pmatrix} I_K & -I_K  \cr
 \lambda ^{1/2} A_2^{-1/2} & \lambda ^{1/2}A_2^{-1/2}  \end{pmatrix} , \quad
B^{-1}(z,\lambda ) = \frac{1}{2} \begin{pmatrix} I_K & \lambda ^{-1/2}A_2^{1/2}  \cr
-I_K & \lambda ^{-1/2}A_2^{1/2} \end{pmatrix}.  \]
Note that
\[\begin{pmatrix} 0_K & I_K  \cr
\lambda A_2^{-1} & 0_K  \end{pmatrix} B = B\begin{pmatrix} \lambda ^{1/2}A_2^{-1/2} & 0_K \cr 0_K & -  \lambda ^{1/2}A_2^{-1/2}
\end{pmatrix},\]
so there is a block diagonalization.
\[ \begin{pmatrix} 0_K & I_K  \cr
\lambda A_2^{-1} & 0_K  \end{pmatrix}  = B \begin{pmatrix} \lambda ^{1/2}A_2^{-1/2} & 0_K \cr 0_K & -  \lambda ^{1/2}A_2^{-1/2}\end{pmatrix} B^{-1}.\]

Also,
\[B^{-1} \begin{pmatrix} 0_K & 0_K  \cr
A_2^{-1}A_0 & A_2^{-1}A_1  \end{pmatrix} B
= \frac{1}{2} \begin{pmatrix} \lambda ^{-1/2}A_2^{-1/2}A_0 & \lambda ^{-1/2}A_2^{-1/2}A_1 \cr
 \lambda ^{-1/2}A_2^{-1/2}A_0 & \lambda ^{-1/2}A_2^{-1/2}A_1  \end{pmatrix}B \]
\[ = \frac{1}{2}  \begin{pmatrix}  A_2^{-1/2}[\lambda ^{-1/2}A_0 + A_1A_2^{-1/2}] &  A_2^{-1/2}[- \lambda ^{-1/2}A_0 + A_1A_2^{-1/2}]  \cr
A_2^{-1/2}[ \lambda ^{-1/2}A_0 + A_1A_2^{-1/2}] &  A_2^{-1/2}[- \lambda ^{-1/2}A_0 + A_1A_2^{-1/2}] \end{pmatrix} \]

Rewrite \eqref{fos} as an equation for $w = B^{-1}u$, 
\begin{equation} \label{weqn}
w' =  {\cal A}(z,\lambda) w,
\end{equation}
where 
\[{\cal A}  = \lambda ^{1/2} {\cal A}_2 + {\cal A}_1 + \lambda ^{-1/2}{\cal A}_0,\]
the matrices ${\cal A}_j$ being independent of $\lambda $.

To treat a basis of solutions to \eqref{weqn} simultaneously, 
let $W(z,\lambda )$ be the $2K \times 2K$ matrix function satisfying \eqref{weqn} with the identity matrix $I_{2K}$ as initial value $W(0,\lambda )$.  
Integration of \eqref{weqn} leads to the standard integral equation
\begin{equation} \label{fois}
W(z,\lambda ) = W(0,\lambda ) + \int_0^z {\cal A}(s,\lambda )W(s,\lambda ) \ ds .
\end{equation}
The matrix function
\[U(z,\lambda ) = B(z,\lambda )W(z,\lambda )B^{-1}(0,\lambda ) \]
will then be a $K \times K$ matrix solution of $U' = AU$ with $U(0,\lambda ) = I_K$.

Returning to \eqref{fois},
fix $\zeta \in \overline{\strip} $ with $0 \le \Re (\zeta ) \le 2\pi $ and integrate along the path $z(t) = t\zeta $, for $0 \le t \le 1$.  Along this line \eqref{fois} may be expressed as 
\[W(z(t),\lambda ) = W(0,\lambda ) + \int_0^t {\cal A}(z(s),\lambda )W(z(s),\lambda ) \ \zeta \ ds .\]
With the usual Euclidean norm $\| X \| _E$ for $X \in \complex ^{2K}$ and the matrix norm
\[ \| {\cal A} \| = \sup_{\| X \| _E = 1} \| {\cal A}X \| _E\]
for $2K \times 2K$ matrices ${\cal A}$, the integral equation gives
\[\| W(z(t),\lambda ) \| \le \| I_{2K} \| + \int_0^t \| {\cal A}(z(s),\lambda ) \| \| W(z(s),\lambda ) \| |\zeta | \ ds.\]
Then Gronwall's inequality \cite[p. 241]{CoddCar} gives
\begin{equation} \label{gbnd}
\| W(z,\lambda ) \| \le  \exp( \int_0^1 \| {\cal A}(z(s),\lambda ) \| |\zeta | \ ds ) .
\end{equation}

Since $\| {\cal A} \|  \le  |\lambda |^{1/2} \| {\cal A}_2 \| + \| {\cal A}_1 \| + |\lambda |^{-1/2}\| {\cal A}_0 \| $,
there will be constants $C_0$ and $C_1$ such that $\| {\cal A}(z,\lambda ) \| \le C_0 | \lambda |^{1/2} + C_1$. 
Combining  this estimate for $\| {\cal A} \|$ with $U(z,\lambda ) = B(x,\lambda )W(x,\lambda )B^{-1}(0,\lambda )$ leads to with the following lemma.

\begin{lem} \label{growest}
There are constants $C_2,C_3$ such that the solution of the initial value problem
\[U'(z,\lambda ) = A(z,\lambda )U(z,\lambda ), \quad U(0,\lambda ) = I_{2K},\]
satisfies $\| U(z,\lambda ) \| \le C_2 \| \exp( C_3 | \lambda |^{1/2} ) $ uniformly in $\overline{\strip } \cap \{ 0 \le \Re (z) \le 2\pi \} $.  
\end{lem}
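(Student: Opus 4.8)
The plan is to exploit the block-diagonalizing change of dependent variable already set up. Writing $w = B^{-1}(z,\lambda)u$, the fundamental matrix $W(z,\lambda)$ of \eqref{weqn} with $W(0,\lambda)=I_{2K}$ satisfies the integral equation \eqref{fois}, where the coefficient $\mathcal{A} = \lambda^{1/2}\mathcal{A}_2 + \mathcal{A}_1 + \lambda^{-1/2}\mathcal{A}_0$ displays its $\lambda$-dependence explicitly, the $\lambda$-independent matrices $\mathcal{A}_j$ being assembled from $A_0$, $A_1$ and the square root $A_2^{1/2}$ of the preceding lemma. First I would fix $\zeta$ in the compact cross-section $\overline{\strip}\cap\{0\le\Re z\le 2\pi\}$, integrate \eqref{fois} along the segment $z(t)=t\zeta$, $0\le t\le 1$, and apply Gronwall's inequality to obtain $\|W(z,\lambda)\| \le \exp\big(\int_0^1 \|\mathcal{A}(z(s),\lambda)\|\,|\zeta|\,ds\big)$, which is precisely \eqref{gbnd}.

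The second step is to make the exponent explicit. Since $A_2^{1/2}(z)$ and its inverse are continuous, hence bounded, on the compact cross-section by the preceding lemma, and $A_0,A_1$ are continuous there by regularity of $\dop_A$, each $\mathcal{A}_j$ is bounded, so $\|\mathcal{A}(z,\lambda)\| \le C_0|\lambda|^{1/2} + C_1$ uniformly in $z$; combined with the bound on $|\zeta|$ by the diameter $M$ of the region this gives $\|W(z,\lambda)\| \le \exp\big((C_0|\lambda|^{1/2}+C_1)M\big)$.

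The third step is to transfer the estimate back to $U = B(z,\lambda)W(z,\lambda)B^{-1}(0,\lambda)$. From the explicit block forms, $\|B(z,\lambda)\| \le c_1 + c_2|\lambda|^{1/2}$ and $\|B^{-1}(0,\lambda)\| \le c_3 + c_4|\lambda|^{-1/2}$, so for $|\lambda|\ge 1$ their product is $O(|\lambda|^{1/2})$; multiplying by the exponential bound on $\|W\|$ and absorbing the polynomial prefactor into a slightly larger constant in the exponent (using $1+x\le e^x$) yields $\|U(z,\lambda)\| \le C_2\exp(C_3|\lambda|^{1/2})$ on $|\lambda|\ge 1$. For $|\lambda|\le 1$ the conjugation by $B$ is singular, so there I would instead argue directly from $u' = A(z,\lambda)u$: on that parameter range $\|A(z,\lambda)\|$ is bounded on the compact cross-section, so Gronwall gives a bound for $U$ independent of $\lambda$, which is trivially dominated by $C_2\exp(C_3|\lambda|^{1/2})$ after enlarging $C_2$.

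The computations are routine; the only points requiring care are the small-$\lambda$ regime, where one bypasses the singular block diagonalization, and verifying that every constant produced is genuinely independent of $z$ over $\overline{\strip}\cap\{0\le\Re z\le 2\pi\}$ — which follows from compactness together with the continuity and invertibility of $A_2^{1/2}$ supplied by the previous lemma. I do not anticipate a substantial obstacle.
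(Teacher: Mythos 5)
Your proposal follows essentially the same route as the paper: Gronwall applied to the integral equation \eqref{fois} along the segment $z(t)=t\zeta$, the bound $\|{\cal A}(z,\lambda)\| \le C_0|\lambda|^{1/2}+C_1$, and then conjugation back through $U = B W B^{-1}(0,\lambda)$ with the polynomial factors from $B$, $B^{-1}$ absorbed into the exponential. Your separate direct Gronwall argument on $u'=A(z,\lambda)u$ for $|\lambda|\le 1$ is a sensible tidying of a point the paper passes over silently, but it does not change the substance of the argument.
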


The nonhomogeneous equation
\begin{equation} \label{NH}
A_2(z) \frac{d^2}{dz^2}Y  + A_1(z) \frac{d}{dz}Y + (A_0(z) - \lambda I)Y = f
\end{equation}
may also be reduced to a first order system
\begin{equation}
V'(z,\lambda ) = A(z,\lambda )V + F(z), \quad F(z) = \begin{pmatrix} 0_K \cr A_2^{-1}f \end{pmatrix}.
\end{equation}
Using the basis $U(z,\lambda )$ for the homogeneous equation, the variation of parameters method \cite[p. 33]{CoddCar}, \cite[p. 74-75]{CL} gives solutions
\begin{equation}
V(z,\lambda ) = U(z,\lambda )\xi + U(z,\lambda ) \int_0^z U^{-1}(s,\lambda ) F(s) \ ds , \quad V(0,\lambda ) = \xi .
\end{equation}
The initial value $\xi $ giving a $2\pi $ periodic solution is 
\[\xi = [I_{2K} - U(2\pi ,\lambda )]^{-1}  U(2\pi ,\lambda ) \int_0^{2\pi } U^{-1}(s,\lambda ) F(s) \ ds . \]
That is, \eqref{NH} has a unique $2\pi $ periodic solution 
if and only if $1$ is not an eigenvalue of the (Floquet or monodromy) matrix $U(2\pi , \lambda )$,
which thus characterizes the set of eigenvalues for $\dop _A$.
The resolvent set of $\dop _A$ is the complement of the set of eigenvalues. 

To obtain uniform estimates for normalized eigenfunctions, the leading coefficient
of $\dop _A$ is assumed to have positive real part on the real axis.  Since $\dop _A$ is regular,
the coefficients may be expressed in the form \eqref{goodform1}.

\begin{thm} \label{ebnds}
Suppose the leading coefficient of the regular operator 
\begin{equation} \label{goodform1}
\dop = -DP_2(z)D + P_1(z)D + P_0(z)
\end{equation}
satisfies 
\begin{equation} \label{posreal}
\Re (V^*P_2(x)V) \ge C_0 \| V  \|_E^2 , \quad V \in \complex ^K, \quad x \in \real , \quad C_0  > 0.
\end{equation}
Assume too that $\psi $ is a periodic eigenfunction for $\dop $ with eigenvalue $\lambda $, normalized in
$\oplus _K \L2per$ so that $\| \psi \| _L = 1$.

Then there are constants $C_1,C_2$ such that for all 
$z \in \strip $ and all eigenvalues $\lambda $,
\begin{equation} \label{strest}
\| \psi (z) \| _E \le C_1 \exp( C_2 | \lambda |^{1/2} ) .  
\end{equation}

\end{thm}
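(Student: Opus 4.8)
\textbf{Overall strategy.} The plan is to bound the normalized eigenfunction $\psi$ pointwise in the strip by controlling two ingredients separately: first, the size of the Cauchy data $(\psi(0),\psi'(0))$ in $\complex^{2K}$ in terms of $\|\psi\|_L = 1$; and second, the growth of the solution operator $U(z,\lambda)$ of the associated first-order system as one moves off the real axis into $\overline{\strip}$. The second ingredient is already in hand: \lemref{growest} gives $\|U(z,\lambda)\| \le C_2\exp(C_3|\lambda|^{1/2})$ uniformly for $z \in \overline{\strip}$ with $0 \le \Re(z) \le 2\pi$. Since $\psi$ solves $\dop\psi = \lambda\psi$, writing $u(z,\lambda) = (\psi(z),\psi'(z))^T$ we have $u(z,\lambda) = U(z,\lambda)\,U(x_0,\lambda)^{-1}u(x_0,\lambda)$ for any real base point $x_0$, so the whole problem reduces to estimating $\|u(x_0,\lambda)\|_E$ at some conveniently chosen real point $x_0 \in [0,2\pi]$ in terms of the $\L2per$-normalization.

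\textbf{Bounding the Cauchy data on the real axis.} This is where the positive real part hypothesis \eqref{posreal} is used. Because $\dop = -DP_2D + P_1D + P_0$ with $\Re(V^*P_2(x)V) \ge C_0\|V\|_E^2$ on $\real$, one gets a quadratic-form identity: pairing $\dop\psi = \lambda\psi$ with $\psi$ in $\oplus_K\L2per$ and integrating by parts over the period (no boundary terms, by periodicity) yields
\[
\int_0^{2\pi} (\psi')^*P_2\psi' \,dx + \text{(lower order in }\psi',\psi) = \lambda \int_0^{2\pi}\psi^*\psi\,dx = \lambda.
\]
Taking real parts and using \eqref{posreal} together with a Cauchy–Schwarz/Young estimate to absorb the $P_1$ term gives $\|\psi'\|_L^2 \le C(|\lambda| + 1)$. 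Thus $\psi$ and $\psi'$ both have $\L2per$-norms bounded polynomially in $|\lambda|$. Since $\psi$ is a smooth periodic function whose first and (via the equation) second derivatives are $L^2$-controlled, Sobolev embedding on the circle — or more elementarily, the inequality $\|g\|_\infty^2 \le C(\|g\|_L^2 + \|g'\|_L^2)$ for periodic $g$ — upgrades these to a pointwise bound $\|u(x,\lambda)\|_E \le C(1+|\lambda|)$ at every real $x$, in particular at $x_0 = 0$ (and similarly $\|u(x,\lambda)^{-1}\|$ type quantities are handled through $U$, not needed here). A polynomial factor $(1+|\lambda|)$ is harmless since it is absorbed by adjusting $C_2$ in $\exp(C_2|\lambda|^{1/2})$ — though strictly one should note $(1+|\lambda|)\exp(C_3|\lambda|^{1/2}) \le C_1\exp(C_2|\lambda|^{1/2})$ for suitable constants.

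\textbf{Assembling the estimate.} Combine the two bounds: for $z \in \strip$ with $\Re(z) \in [0,2\pi]$,
\[
\|\psi(z)\|_E \le \|u(z,\lambda)\|_E \le \|U(z,\lambda)\|\cdot\|u(0,\lambda)\|_E \le C_2 e^{C_3|\lambda|^{1/2}}\cdot C(1+|\lambda|) \le C_1 e^{C_2|\lambda|^{1/2}},
\]
and by $2\pi$-periodicity this extends to all of $\strip$. This gives \eqref{strest}.

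\textbf{Main obstacle.} The delicate point is the passage from the $\L2per$ bounds on $\psi,\psi'$ to a pointwise bound with a polynomial (not exponential) dependence on $|\lambda|$ — one must be careful that the quadratic-form manipulation really does control $\|\psi'\|_L$ by $C|\lambda|^{1/2}$ or at worst $C|\lambda|$, rather than something worse, and that the lower-order coefficients $P_1,P_0$ (only continuous, by regularity) do not spoil the absorption argument. A secondary technical nuisance is that \lemref{growest} as stated restricts to $0 \le \Re(z) \le 2\pi$; invoking periodicity of $\psi$ (not of $U$) is the right way to cover the full strip, and this should be stated explicitly. Neither difficulty is serious, but the constant-tracking in the quadratic form is the step most likely to need care.
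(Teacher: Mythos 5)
Your proposal is correct and follows essentially the same route as the paper: an integration-by-parts (quadratic form) estimate on the real line using \eqref{posreal} to get $\| D\psi \|_L^2 \le C(|\lambda|+1)$, then pointwise control of the Cauchy data $(\psi,\psi')$ on $[0,2\pi]$ (the paper uses the fundamental theorem of calculus from a well-chosen point rather than the Sobolev inequality, obtaining slightly better polynomial powers, but this difference is immaterial since polynomial factors are absorbed into $\exp(C_2|\lambda|^{1/2})$), and finally propagation into $\overline{\strip}$ via the fundamental matrix bound of \lemref{growest}. The points you flag (using the equation to control $\psi''$, and invoking periodicity of $\psi$ to pass from $0\le\Re(z)\le 2\pi$ to the whole strip) are handled the same way, implicitly, in the paper.
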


\begin{proof}

On the real axis $\psi $ satisfies the equation
\begin{equation} \label{eval2}
-DP_2(x)D \psi +  P_1(x) D\psi + P_0 \psi = \lambda \psi .
\end{equation}
Multiply by $\psi ^*$ and integrate by parts, taking advantage of the periodicity of $\psi $,  to get
\[ \frac{1}{2\pi } \int_0^{2\pi } D\psi ^* P_2 D\psi  \ dx = \lambda - \frac{1}{2\pi } \int_0^{2\pi } \psi ^*P_0\psi + \psi ^*P_1D\psi  \ dx .\]
Taking the real part and using \eqref{posreal}, the Cauchy-Schwarz inequality gives 
\[ \| D\psi \|_L^2 \le c_2(|\lambda | + 1) + c_3 \| D\psi \| _L,\]
so, after completing the square, there is a constant $c_4$ such that
\[ \| D\psi \|_L^2 \le c_4(|\lambda | + 1).\]

Since $\| \psi \| _{L} = 1$, there is a point $x_0 \in [0,2\pi ]$ with $\| \psi (x_0) \| ^2_E \le 1$.
For any $x \in [0,2\pi]$, the fundamental theorem of Calculus and the Cauchy-Schwarz inequality give
\[|\psi ^*\psi (x) - \psi ^*\psi (x_0)| = |\int_{x_0}^x (D\psi )^*\psi (s) + \psi ^*(D\psi (s)) \ ds | \le 4\pi \| D\psi  \|_L  \| \psi \|_L ,\]
leading immediately to a pointwise estimate
\[\psi ^*\psi (x) \le c_0(|\lambda |^{1/2} + 1) , \quad 0 \le x \le 2\pi . \]

Similarly, the estimate $| D\psi ^* D\psi (x_1) | _E \le c_2[|\lambda |+1]$ will hold at some point $x_1 \in [0,2\pi ]$.
For any $x \in [0,2\pi]$, 
\[|D\psi ^*D\psi (x) - D\psi ^*D\psi (x_1)| = |\int_{x_1}^x D^2 \psi ^*D\psi (s) + D\psi ^*D^2\psi (s) \ ds |.\]
Using the eigenvalue equation \eqref{eval2} to replace the second derivatives, and invoking the 
Cauchy-Schwarz inequality again, leads to a pointwise estimate
\[D\psi ^*D\psi (x) \le c_1(|\lambda |^{3/2} + 1), \quad 0 \le x \le 2\pi . \]

The vector function 
\[\begin{pmatrix}\psi (x,\lambda ) \cr \psi '(x,\lambda ) \end{pmatrix}\] 
can be expressed as a linear combination of the columns of the matrix function
$U(z,\lambda )$ from \lemref{growest}.  The pointwise estimates for $\| \psi \| _e^2$ and $\| D\psi \| _E^2$
bound the coefficients of the linear combination by $c(|\lambda |^{3/4} + 1)$, so \lemref{growest}
gives  \eqref{strest}.

\end{proof}

In addition to the earlier hypotheses, the operator $\dop $ is now assumed to be self adjoint as an operator on $\oplus _K \L2per$.
The positivity condition \eqref{postrip} then implies that $\dop + \mu I_K$ is positive for $\mu > 0$ and sufficiently large.
The addition of a constant multiple of the identity will not affect eigenfunction completeness, so 
for simplicity, assume that 
\begin{equation} \label{pos2}
\langle \dop f,f \rangle _L \ge \| f \| ^2 _L 
\end{equation}
We want to take advantage of some coarse estimates \cite{Carlson79} for the eigenvalues $\{ \lambda _n, n = 1,2,3, \dots \} $ of a positve selfadjoint 
regular operator $\dop $, listed in increasing order with multiplicity.  

\begin{lem} \label{szcmp}
Suppose $\dop$ is regular, selfadjoint, and satisfies \eqref{pos2} on $\L2per$, 
with leading coefficient satisfying
\begin{equation} \label{posline}
V^*P_2(x)V \ge C_0 \| V  \|_E^2 , \quad V \in \complex ^K, \quad C_0  > 0, \quad 0 \le x \le 2\pi  .
\end{equation}

There are constants $\alpha _1,\alpha _2 > 0$ such that for all $f$ in the domain of $-D^2$,
\begin{equation} \label{sizecmp}
\alpha _1 \| \dop f \| \le \| (-D^2 + I_K)f \| \le \alpha _2 \| \dop f \| .
\end{equation}

For some $\beta _1,\beta _2 > 0$ and all sufficiently large integers $n$  
the eigenvalues $\lambda _n$ of $\dop $ satisfy 
\begin{equation} \label{betaineq}
\beta _1^2 n^2 \le \lambda _n \le \beta _2^2 n^2 .
\end{equation}

\end{lem}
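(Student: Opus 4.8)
The plan is to establish \eqref{sizecmp} by comparing $\dop$ with $-D^2+I_K$ term by term, and then to deduce \eqref{betaineq} from \eqref{sizecmp} by a singular-number argument. First some preliminaries. Since $\dop=-DP_2D+P_1D+P_0$ is selfadjoint on $\oplus_K\L2per$, its leading coefficient is Hermitian, $P_2(x)=P_2(x)^*$, so the hypothesis \eqref{posline} says precisely that $C_0I_K\le P_2(x)$ for $0\le x\le 2\pi$; continuity and $2\pi$-periodicity also give a bound $P_2(x)\le MI_K$ and $\|P_2(x)^{-1}\|\le C_0^{-1}$. Expanding $DP_2Df=(DP_2)Df+P_2D^2f$ rewrites $\dop$ in non-divergence form: for $f$ in the domain $\dom(-D^2)$,
\[\dop f=-P_2D^2f+\bigl[P_1-(DP_2)\bigr]Df+P_0f,\qquad D^2f=P_2^{-1}\Bigl(-\dop f+\bigl[P_1-(DP_2)\bigr]Df+P_0f\Bigr),\]
all coefficients being continuous, hence bounded, on $[0,2\pi]$ because $\dop$ is regular. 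The first identity shows in particular that $\dom(-D^2)\subseteq\dom(\dop)$, so $\dop f$ is defined for every $f$ in the domain of $-D^2$ and the integrations by parts used below are legitimate.

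For the bound $\|\dop f\|\le\alpha_1^{-1}\|(-D^2+I_K)f\|$, combine the first identity above with the elementary Fourier estimates $\|f\|,\ \|Df\|,\ \|D^2f\|\le\|(-D^2+I_K)f\|$ (valid since $1,\,n^2,\,n^4\le(1+n^2)^2$), choosing $\alpha_1^{-1}$ to dominate $M+\|P_1-DP_2\|_\infty+\|P_0\|_\infty$. For the reverse bound, positivity \eqref{pos2} gives $\|f\|^2\le\langle\dop f,f\rangle_L\le\|\dop f\|\,\|f\|$, hence $\|f\|\le\|\dop f\|$. Integrating by parts (using periodicity) and invoking $P_2(x)\ge C_0I_K$,
\[C_0\|Df\|^2\le\langle P_2Df,Df\rangle_L=\Re\langle\dop f,f\rangle_L-\Re\langle P_1Df,f\rangle_L-\Re\langle P_0f,f\rangle_L,\]
and a Cauchy--Schwarz bound on the right-hand side followed by absorption of the resulting $\|Df\|\,\|f\|$ term (completing the square) yields $\|Df\|\le C\|\dop f\|$. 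Substituting the bounds on $\|f\|$ and $\|Df\|$ into the second identity gives $\|D^2f\|\le C'\|\dop f\|$, whence $\|(-D^2+I_K)f\|\le\|D^2f\|+\|f\|\le\alpha_2\|\dop f\|$. This proves \eqref{sizecmp}.

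For the eigenvalue bounds put $B=-D^2+I_K$. Both $\dop$ and $B$ are positive selfadjoint operators on $\oplus_K\L2per$ with compact resolvent (for $B$ this is immediate; for $\dop$ it follows from \eqref{sizecmp}, since $\dop^{-1}=B^{-1}(B\dop^{-1})$ is compact once $B\dop^{-1}$ is seen to be bounded). By \eqref{sizecmp}, substituting $f=\dop^{-1}g$, respectively $f=B^{-1}g$, shows $\|B\dop^{-1}\|\le\alpha_2$ and $\|\dop B^{-1}\|\le\alpha_1^{-1}$. From $\dop^{-1}=B^{-1}(B\dop^{-1})$ and $B^{-1}=\dop^{-1}(\dop B^{-1})$ together with the singular-number inequality $s_n(XY)\le s_n(X)\|Y\|$ one gets $s_n(\dop^{-1})\le\alpha_2\,s_n(B^{-1})$ and $s_n(B^{-1})\le\alpha_1^{-1}s_n(\dop^{-1})$; since $\dop^{-1}$ and $B^{-1}$ are positive compact operators, their $n$-th singular numbers equal $1/\lambda_n$ and $1/\mu_n$, where $\lambda_n$ and $\mu_n$ are the eigenvalues of $\dop$ and $B$ listed increasingly with multiplicity. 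Hence $\mu_n/\alpha_2\le\lambda_n\le\mu_n/\alpha_1$. Finally the eigenvalues of $B$ are $m^2+1$, $m\in\Z$, with multiplicity $2K$ for $m\ge1$ and $K$ for $m=0$, so $\mu_n=m(n)^2+1$ with $m(n)\sim n/(2K)$; consequently $\gamma_1^2n^2\le\mu_n\le\gamma_2^2n^2$ for all large $n$, and \eqref{betaineq} follows with $\beta_1=\gamma_1/\sqrt{\alpha_2}$ and $\beta_2=\gamma_2/\sqrt{\alpha_1}$.

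The genuinely substantive step is the coercive estimate $\|Df\|\le C\|\dop f\|$: the first-order term $P_1D$ is only half an order below the leading term, and it can be absorbed only by using both the positivity condition \eqref{posline} on $P_2$ and the positivity \eqref{pos2} of $\dop$, via completing the square. The other mildly delicate point is the passage from the operator (graph-norm) comparison \eqref{sizecmp} to the eigenvalue comparison, which must go through the singular-number inequalities rather than a quadratic-form min--max argument; everything else is routine.
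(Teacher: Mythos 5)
Your proof is correct, and it differs from the paper's in both halves. For \eqref{sizecmp} the paper essentially just asserts the comparison from \eqref{domdef} and \eqref{posline}; you supply the real content: the easy bound $\|\dop f\|\le C\|(-D^2+I_K)f\|$ from boundedness of the coefficients, and the coercive direction via $\|f\|\le\|\dop f\|$ from \eqref{pos2}, the integration-by-parts inequality $C_0\|Df\|^2\le \Re\langle \dop f,f\rangle _L+\dots$ with absorption of the first-order term, and recovery of $\|D^2f\|$ from the non-divergence form — this is a sound (and welcome) elaboration. For \eqref{betaineq} the paper argues by counting rather than through inverses: the eigenvalues of $-D^2$ not exceeding $n^2$ number fewer than $2K(n+1)$, so if $\alpha_2\lambda_{2K(n+1)}<n^2$ one can pick a unit vector in the span of the first $2K(n+1)$ eigenfunctions of $\dop$ orthogonal to all low eigenfunctions of $-D^2$, and then $\|\alpha_2\dop\psi\|<n^2$ while $\|{-D^2}\psi\|>n^2$ contradicts \eqref{sizecmp}; interpolating over $2K(n+1)\le m<2K(n+2)$ gives the lower bound, the upper bound being symmetric. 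Your route through $s_n(XY)\le s_n(X)\|Y\|$ applied to $\dop^{-1}=B^{-1}(B\dop^{-1})$ is a clean packaged version of the same comparison (the paper's counting argument is in effect the min--max characterization of the singular values of the two inverses carried out by hand), so your closing remark that a min--max argument is unavailable overstates the point: what is unavailable is a comparison of the forms of $\dop$ and $B$ themselves, but min--max for $\|\dop f\|$ versus $\|Bf\|$ is exactly what the paper uses. One hypothesis you should make explicit: applying \eqref{sizecmp} with $f=\dop^{-1}g$ presumes $\dom(\dop)\subseteq\dom(-D^2)$ in $\oplus_K\L2per$, i.e.\ the standard regularity fact that the periodic realization of a regular second-order operator has domain equal to that of $-D^2$; this is true here, and the paper sidesteps the issue by testing \eqref{sizecmp} only against (combinations of) eigenfunctions, which are automatically smooth.
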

 \begin{proof}
 
Using \eqref{domdef} and \eqref{posline} we see that \eqref{sizecmp} holds.

Considered on $\oplus _K \L2per$, the operator $-D^2$ has eigenvalues $0$, with multiplicity $K$, and $n^2$ with multiplicity $2K$ for
$n = 1,2,3, \dots $.   For any positive integer $n$, the number of eigenvalues of $-D^2$ which are less than or equal to $n^2$ is less than
$2K(n+1)$.  

Suppose that $\alpha _2 \lambda _{2K(n+1)} < n^2$ for some $n$.
Then there is a norm $1$ eigenfunction $\psi $ for $\alpha _2\dop $ with eigenvalue less than $n^2$ which is orthogonal to
all eigenfunctions of $-D^2$ with eigenvalues at most $n^2 $.  This would give
\[ \|\alpha _2\dop \psi \|  < n^2  , \quad \| -D^2 \psi \| \ge n^2 ,\]
contradicting \eqref{sizecmp}.  

Thus $\lambda _{2K(n+1)} \ge n^2/\alpha _2$ for $n = 1,2,3,\dots $.  Every positive integer $m$ satisfies $2K(n+1) \le m < 2K(n+2)$ for some integer $n$, so
\[\lambda _m \ge n^2/\alpha _2 \ge \frac{1}{\alpha _2} (\frac{m}{2K} - 2)^2.\]
giving  $\lambda _m \ge \beta _1m^2$ for $m$ sufficiently large.

The other comparison in \eqref{betaineq} is similar.
 
\end{proof}

\subsection{Semigroup kernels and eigenfunction completeness}

Recall that a strongly continuous semigroup $S(t)$ of bounded operators on a Banach space is holomorphic if there is a $\delta $ with 
$0 < \delta < \pi/2$ such that $S(t)$ (with its semigroup properties) extends to a holomorphic bounded operator valued function $S(\zeta )$ in the sector $\arg (\zeta ) < \delta $.
Details may be found in \cite[p. 489-493]{Kato}, \cite[p. 60-68]{Pazy}, or \cite[p. 248-253]{RS2}. 
If the real part positivity of the leading coefficient $P_2(z)$ extends to $\overline{\strip }$, then
$\dop $ generates a holomorphic semigroup on $\H2$.

\begin{prop}
Suppose the leading coefficient of the regular operator 
\begin{equation} \label{goodform}
\dop = -DP_2(z)D + P_1(z)D + P_0(z)
\end{equation}
satisfies 
\begin{equation} \label{postrip}
\Re (V^*P_2(z)V) \ge C_0 \| V  \|_E^2 , \quad V \in \complex ^K, \quad C_0  > 0, \quad z \in \overline{\strip} .
\end{equation}

Then $\dop $ generates a holomorphic semigroup $S(t)$ in $\oplus _K \H2 $. 
\end{prop}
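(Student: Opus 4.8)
The plan is to verify the hypotheses of a standard generation theorem for holomorphic semigroups, in the form best suited to a sectorial operator: namely, that $\dop$ is closed, densely defined, and that its numerical range is contained in a sector of half-angle strictly less than $\pi/2$, with the resolvent set including the complement of that sector together with the expected resolvent bound. The operator $\dop$ on the domain $\dom$ is already known to be closed (the proposition in \secref{2}), and $\dom_0$, the span of $\{e^{inz}E_k\}$, is a dense core. So the work is concentrated on the numerical range and resolvent estimates in $\oplus_K \H2$.

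First I would compute the $\H2$ numerical range of $-\dop$. Writing $\dop = -DP_2(z)D + P_1(z)D + P_0(z)$ and using the inner product $\langle f,g\rangle_H = \int_{\BS} g^*(z) f(z)$, integration by parts along the two boundary lines $\tau = \pm T$ (legitimate by \propref{Hbasics}, since boundary values lie in $\L2per$ and the functions in $\dom_0$ extend analytically across the boundary) moves one derivative off the leading term, producing
\[
\langle \dop f, f\rangle_H = \int_{\BS} (Df)^* P_2(z)\, Df + \int_{\BS} f^* P_1(z)\, Df + \int_{\BS} f^* P_0(z)\, f,
\]
with no boundary contribution because $f$ is $2\pi$-periodic on each line. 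The hypothesis \eqref{postrip} — real part positivity of $P_2$ throughout $\overline{\strip}$, hence on both boundary lines — gives $\Re \langle \dop f,f\rangle_H \ge C_0 \|Df\|_H^2 - c\,\|f\|_H \|Df\|_H - c'\|f\|_H^2$, and completing the square yields $\Re\langle \dop f,f\rangle_H \ge -c''\|f\|_H^2$ while simultaneously $|\Im\langle \dop f,f\rangle_H| \le c'''(\|Df\|_H^2 + \|f\|_H^2) \le C(\Re\langle \dop f,f\rangle_H + \|f\|_H^2)$. Thus, after shifting by a suitable constant multiple of $I_K$ (which changes neither the semigroup's holomorphy nor the conclusion), the numerical range of $\dop$ lies in a sector $|\arg w| \le \omega$ with $\omega < \pi/2$.

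Second, to promote the sectorial numerical range to a genuine holomorphic generator I would invoke the fact, established in \secref{2}, that $\dop$ has compact resolvent on $\oplus_K \H2$ whenever the resolvent set is nonempty, together with the classical characterization of eigenvalues via the monodromy matrix $U(2\pi,\lambda)$ from \secref{4.1}. The growth bound of \lemref{growest}, $\|U(z,\lambda)\| \le C_2 \exp(C_3|\lambda|^{1/2})$, shows that for $\lambda$ in the open sector $|\arg(-\lambda)| > \omega$ with $|\lambda|$ large, $1$ is not an eigenvalue of $U(2\pi,\lambda)$ — more carefully, one uses the variation-of-parameters resolvent formula from \secref{4.1} and the sectorial numerical-range bound $\|(\dop - \lambda)f\|_H \ge \mathrm{dist}(\lambda, \text{sector})\,\|f\|_H$ to get both that $\dop - \lambda$ is injective with closed range and (by the same estimate applied to $\dop^*$, whose numerical range sits in the conjugate sector) surjective, hence invertible, with $\|(\dop-\lambda)^{-1}\|_H \le M/|\lambda|$ off the sector. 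That is exactly the resolvent estimate required by, e.g., \cite[p.~489--493]{Kato} or \cite[p.~60--68]{Pazy} for $-\dop$ (suitably shifted) to generate a bounded holomorphic semigroup $S(t)$ on $\oplus_K\H2$.

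The main obstacle I anticipate is the integration-by-parts step on $\BS$: one must be sure that for $f$ in the core $\dom_0$ the manipulation is valid term by term on each boundary line and that cross-terms between the two lines do not appear (they do not, since $\langle\cdot,\cdot\rangle_H$ is a sum of two independent $\L2per$ pairings), and then that the resulting form-bound extends from the core to all of $\dom$. A secondary subtlety is that real-part positivity of $P_2$ only on $\BS$ — not on a set with interior in the $z$-plane relevant to the $\H2$ norm — is genuinely what the $\H2$ inner product sees, so \eqref{postrip} restricted to $\tau = \pm T$ is precisely the right hypothesis; one should check that the square-root and lower-order coefficients, being continuous on $\overline\strip$, contribute only bounded perturbations of the form, which is routine. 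Once the sectorial form estimate and the off-sector resolvent bound are in hand, the generation theorem applies verbatim.
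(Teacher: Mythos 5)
Your proposal follows essentially the same route as the paper's proof: integrate the leading term by parts over $\BS$, use \eqref{postrip} together with Cauchy--Schwarz on the lower-order terms to show that the form of $\dop +\mu I$ is sectorial for large $\mu$, note that the spectrum consists only of eigenvalues (hence lies in the sector, leaving the left half plane in the resolvent set), and conclude $m$-sectoriality and generation of a holomorphic semigroup via Kato. The only real difference is your optional detour through the numerical range of the adjoint to get surjectivity, which is both unnecessary and the shakiest step for an unbounded operator; the Floquet/eigenvalue characterization of the spectrum already supplies the off-sector resolvent points, exactly as the paper argues.
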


\begin{proof}
For $f $ in the domain of $\dop $, integrate the leading term by parts to get the associated form 
\[\langle \dop  f,f \rangle _H = \int_{\BS} (Df)^* P_2(z) Df + f^*[P_1Df + P_0f].\]
Use the Cauchy-Schwarz inequality to get the bound
\[|\int_{\BS} f^*P_1Df| \le C \| f \|_{H} \| Df \| _H \le C(\epsilon \| Df \| _H^2 + \frac{1}{\epsilon } \| f \| _H^2), \quad \epsilon > 0 .\]
Combine this estimate with the positivity condition \eqref{postrip} and the continuity of the coefficients on ${\overline \strip}$;
for sufficiently large positive constants $\mu $, the form $\langle (\dop +\mu I_K) f,f \rangle _H $  takes values in a sector 
\[|\Im (\langle (\dop +\mu I_K)f,f \rangle _H)| \le \gamma \Re (\langle (\dop +\mu I_K)f,f \rangle _H)  ,\quad \gamma > 0.\]

Since, as noted earlier, points the spectrum of $\dop $ are the eigenvalues, the resolvent set of $\dop +\mu I$ includes the left half plane.
The operator $\dop $ is thus $m$-sectorial \cite[p. 279-280]{Kato} and so generates a holomorphic semigroup \cite[p. 492]{Kato} on $\oplus _K \H2$.
For sufficiently large constants $\mu > 0$, the semigroup $S(t)$ generated by $\dop + \mu I$ will be a semigroup of contractions 
\end{proof}

Completeness of the eigenfunctions will now follow from an expansion of the $\oplus _K \L2per$ kernel for $S(t) = \exp (-t\dop )$ in eigenfunctions,
followed by an analytic continuation of this kernel to $\strip $.

\begin{thm} \label{mainthm}
Suppose the regular operator $\dop $ of \eqref{goodform} is self adjoint and positive as an operator on $\oplus _K \L2per$
and satisfies \eqref{postrip} on $\strip $.
Then the periodic eigenfunctions of $\dop $ have dense span in $\oplus _K \H2 $.
\end{thm}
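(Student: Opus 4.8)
The plan is to realize the semigroup $S(t)$ generated by $\dop$ on $\oplus_K\H2$ as an operator whose range, for every $t>0$, lies in the closed linear span $V$ of the periodic eigenfunctions, and then to deduce $V=\oplus_K\H2$ from strong continuity of $S(t)$ at $t=0$. First I would fix the eigendata. Since $\dop$ is selfadjoint, positive, and (being regular with nonempty resolvent set) has compact resolvent on $\oplus_K\L2per$, there is an orthonormal basis $\{\psi_n\}$ of $\oplus_K\L2per$ of eigenfunctions, with eigenvalues $0<\lambda_1\le\lambda_2\le\cdots\to\infty$; normalize $\|\psi_n\|_L=1$. By \lemref{pereig} the analytic continuations of these $\psi_n$ are precisely the periodic eigenfunctions of $\dop$ on $\oplus_K\H2$, and each, being continuous on $\overline{\strip}$ and $2\pi$-periodic, lies in $\oplus_K\H2$; put $V=\overline{\mathrm{span}}\,\{\psi_n\}\subseteq\oplus_K\H2$, so the goal is $V=\oplus_K\H2$. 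The two estimates I will invoke are \lemref{szcmp}, giving $\lambda_n\ge\beta_1^2 n^2$ for all large $n$, and \thmref{ebnds} (whose hypotheses follow from selfadjointness and \eqref{postrip}), giving $\|\psi_n(z)\|_E\le C_1\exp(C_2|\lambda_n|^{1/2})$ uniformly for $z\in\strip$, hence on $\overline{\strip}$.

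The heart of the proof is to show that for every $g\in\oplus_K\H2$ and $t>0$,
\[
S(t)g=\sum_{n}e^{-t\lambda_n}\,\langle g,\psi_n\rangle_L\,\psi_n ,
\]
with the series converging in $\oplus_K\H2$; here $\langle g,\psi_n\rangle_L$ denotes the $\oplus_K\L2per$ inner product of the restriction of $g$ to $\real$ with $\psi_n$, so that the right side is the continued eigenfunction expansion of the $\oplus_K\L2per$ semigroup $S_L(t)=e^{-t\dop}=\sum_n e^{-t\lambda_n}\langle\cdot,\psi_n\rangle_L\psi_n$. The convergence is the decisive point and is where the two estimates combine: the $n$-th term has supremum norm over $\overline{\strip}$ at most $e^{-t\lambda_n}\|g\|_L\,\|\psi_n\|_{C(\overline{\strip})}\le C_1\|g\|_H\exp\!\bigl(C_2|\lambda_n|^{1/2}-t\lambda_n\bigr)$ — using $\|g\|_L\le\|g\|_H$ from \propref{Hbasics} — and since $x\mapsto C_2\sqrt{x}-tx$ is eventually decreasing while $\lambda_n\to\infty$ at least quadratically, this is summable; thus the series converges uniformly on $\overline{\strip}$ and, the $\oplus_K\H2$ norm being dominated by the supremum norm over $\BS$, in $\oplus_K\H2$. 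Call its sum $\Phi_t g$. To identify $\Phi_t g$ with $S(t)g$, restrict to $\real$: the restriction $r\colon\oplus_K\H2\to\oplus_K\L2per$ is bounded, injective (an analytic function on the connected strip vanishing on $\real$ is zero), and carries the $\oplus_K\H2$-domain of $\dop$ into the $\oplus_K\L2per$-domain with $r\dop=\dop r$ there, so $t\mapsto rS(t)g$ solves $u'=-\dop u$, $u(0)=rg$, in $\oplus_K\L2per$; by uniqueness $rS(t)g=S_L(t)(rg)=r\Phi_t g$, and injectivity of $r$ gives $S(t)g=\Phi_t g$. I expect this identification to be the main obstacle: it is the one place where analyticity, the eigenfunction growth bound of \thmref{ebnds}, and the eigenvalue asymptotics of \lemref{szcmp} must all be brought to bear together, and where the domain bookkeeping for $r$ and the interchange of $r$ with the infinite sum need care. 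The remaining steps are soft.

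Granting the displayed formula, completeness is immediate. Each $S(t)g$ is an $\oplus_K\H2$-limit of finite linear combinations of the $\psi_n$, so $S(t)g\in V$ for every $t>0$; letting $t\downarrow0$ and using strong continuity of the (holomorphic) semigroup, $g=\lim_{t\downarrow0}S(t)g$ lies in the closed subspace $V$. Since $g$ was an arbitrary element of $\oplus_K\H2$, this gives $V=\oplus_K\H2$, i.e., the periodic eigenfunctions of $\dop$ have dense span in $\oplus_K\H2$. (Equivalently, $\mathrm{ran}\,S(t)\subseteq V$ while $\mathrm{ran}\,S(t)$ is dense because $S(t)g\to g$, so any $f\in\oplus_K\H2$ orthogonal to $V$ satisfies $\langle g,f\rangle_H=\lim_{t\downarrow0}\langle S(t)g,f\rangle_H=0$ for all $g$, forcing $f=0$.)
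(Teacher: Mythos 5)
Your proposal is correct and follows essentially the same route as the paper: expand the $\oplus_K\L2per$ semigroup in the eigenbasis, use \thmref{ebnds} together with the eigenvalue growth from \lemref{szcmp} to get uniform convergence of the continued series on $\overline{\strip}$ (hence in $\oplus_K\H2$), identify this with the $\oplus_K\H2$ semigroup via uniqueness for the abstract Cauchy problem in $\oplus_K\L2per$, and conclude by strong continuity of $S(t)$ at $t=0$. Your closing step (range of $S(t)$ lies in the closed span $V$, so $g=\lim_{t\downarrow 0}S(t)g\in V$) is just the dual formulation of the paper's orthogonality argument, which you also note.
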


\begin{proof}
As an operator on  $\oplus _K \L2per$, $\dop $ is self adjoint and positive, so generates a holomorphic semigroup $S(t)$ of contractions there.
Since $\dop $ has compact resolvent there is an orthonormal basis of eigenfunctions $\psi _n$ with eigenvalues $\lambda _n$.
As a semigroup on $\oplus _K \L2per$, $S(t)$ can  be represented using an eigenfunction expansion,
 
\begin{equation} \label{semix}
S_L(t)f =\exp (-t \dop ) f(x) = \sum_{n} c_n \exp (-t \lambda _n) \psi _n
\end{equation}
where
\[ c_n = \langle f, \psi _n \rangle _L= \frac{1}{2\pi } \int_0^{2\pi} \psi _n(s)^* f(s) \ ds .\]

\thmref{ebnds} implies that $|\psi _n(z)| \le C_1\exp(C_2\lambda _n^{1/2})$ for $z \in \strip $, 
and so there is a pointwise estimate
\[\| e^{-t\lambda _n} \psi _n(z) \| _E \le C_1\exp(C_2 \lambda _n^{1/2} - t \lambda _n).\] 
\lemref{szcmp} implies that $\lambda _n^{1/2} \ge \alpha n$ for some $\alpha > 0$.
Thus for $t > 0$ the series in \eqref{semix} converges uniformly to a function analytic in $\strip $ and
continuous on $\overline{\strip }$. 

Now suppose $f \in \oplus _K \H2 $, and $S(t)$ is the $\oplus _K \H2 $ semigroup generated by $\dop $.
Let $g(t) = S(t)f$.  For $t > 0$ the function $g(t)$ satisfies the equation
\begin{equation} \label{Cauchy}
\frac{dg}{dt} = \dop g
\end{equation}
in $\oplus \H2 $, and in particular in $\oplus _K \L2per$. 
Since the solutions of \eqref{Cauchy} are uniquely \cite[p. 104]{Pazy} given by $S_L(t)f$,
$g(t) = S_L(t)f = S(t)f$ in $\oplus _K \L2per$.
For $t > 0$ both $g(t) = S(t) f$ and $S_L(t)f$ have analytic continuations to $\strip $,
so $g(t)$ is given by the series representation \eqref{semix}.

Finally, suppose $h \in \oplus _K \H2 $ is othogonal to all eigenfunctions of $\dop $.
Then
\[\langle S(t)f,h \rangle _H =  \langle \sum_{n} c_n \exp (-t \lambda _n) \psi _n, h \rangle = 0, \quad t > 0 .\]
But $S(t)f$ converges in $\oplus _K \H2 $ to $f$ for all $f$ in $\oplus _K \H2 $, implying that $h = 0$.
Consequently, the periodic eigenfunctions of $\dop $ have dense span in $\oplus _K \H2 $.

\end{proof}

\bibliographystyle{amsalpha}

\end{document}